\tikzset{
root/.style={circle,fill=black!50,inner sep=0pt, minimum size=3mm},
        dot/.style={circle,fill=black,inner sep=0pt, minimum size=1.5mm},
         bluedot/.style={circle,fill=blue,inner sep=0pt, minimum size=1.5mm},
        reddot/.style={circle,fill=red,inner sep=0pt, minimum size=1.5mm},
        var/.style={circle,fill=black!10,draw=black,inner sep=0pt, minimum size=3mm},
        kernel/.style={semithick,shorten >=2pt,shorten <=2pt},
        kernel1/.style={thick},
        kernels/.style={snake=zigzag,shorten >=2pt,shorten <=2pt,segment amplitude=1pt,segment length=4pt,line before snake=2pt,line after snake=5pt,},
		kernels1/.style={snake=zigzag,segment amplitude=0.5pt,segment length=2pt},
		rho1/.style={dotted,semithick},
        rho/.style={densely dashed,semithick,shorten >=2pt,shorten <=2pt},
           testfcn/.style={dotted,semithick,shorten >=2pt,shorten <=2pt},
        renorm/.style={shape=circle,fill=white,inner sep=1pt},
        labl/.style={shape=rectangle,fill=white,inner sep=1pt},
        xic/.style={very thin,circle,fill=symbols,draw=black,inner sep=0pt,minimum size=1.2mm},
        xi/.style={very thin,circle,fill=blue!10,draw=black,inner sep=0pt,minimum size=1.2mm},
        xix/.style={crosscircle,fill=blue!10,draw=black,inner sep=0pt,minimum size=1.2mm},
	xib/.style={very thin,circle,fill=blue!10,draw=black,inner sep=0pt,minimum size=1.6mm},
	xie/.style={very thin,circle,fill=green!50!black,draw=black,inner sep=0pt,minimum size=1mm},
	xid/.style={very thin,circle,fill=symbols,draw=black,inner sep=0pt,minimum size=1.6mm},
	xibx/.style={crosscircle,fill=blue!10,draw=black,inner sep=0pt,minimum size=1.6mm},
	edgetype/.style={very thin,circle,draw=black,inner sep=0pt,minimum size=5mm},
	nodetype/.style={very thick,circle,draw=black,inner sep=0pt,minimum size=5mm},
	kernels2/.style={very thick,draw=connection,segment length=12pt},
clean/.style={thin,circle,fill=black,inner sep=0pt,minimum size=1mm},	 not/.style={thin,circle,fill=symbols,draw=connection,fill=connection,inner sep=0pt,minimum size=0.5mm},
	>=stealth,
        }
\DeclareFontFamily{U}{matha}{\hyphenchar\font45}
\DeclareFontShape{U}{matha}{m}{n}{
      <5> <6> <7> <8> <9> <10> gen * matha
      <10.95> matha10 <12> <14.4> <17.28> <20.74> <24.88> matha12
      }{}
\DeclareSymbolFont{matha}{U}{matha}{m}{n}
\DeclareFontFamily{U}{mathx}{\hyphenchar\font45}
\DeclareFontShape{U}{mathx}{m}{n}{
      <5> <6> <7> <8> <9> <10>
      <10.95> <12> <14.4> <17.28> <20.74> <24.88>
      mathx10
      }{}
\DeclareSymbolFont{mathx}{U}{mathx}{m}{n}
\DeclareMathDelimiter{\vvvert}{0}{matha}{"7E}{mathx}{"17}
\newcommand{\Wve}{{W_{\eps}}}
\newcommand{\BWve}{{\BW_{\eps}}}
\newcommand{\BBWve}{{\BBW_{\eps}}}
\newcommand{\Wvn}{{W_{n}}}
\newcommand{\BWvn}{{\BW_{n}}}
\newcommand{\BBWvn}{{\BBW_{n}}}
\newcommand{\floor}[1]{\lfloor #1 \rfloor}
\newcommand{\tY}{\tilde{Y}}
\newcommand{\tBX}{\tilde{\mathbf{X}}}
\newcommand{\MB}{\mathcal{B}}
\newcommand{\mfB}{\mathfrak{B}}
\newcommand{\ty}{\tilde{y}}
\newcommand{\BBW}{\mathbb{W}}
\newcommand{\BBB}{\mathbb{B}}
\newcommand{\CC}{{\pmb{\mathscr{C}}}}
\newcommand{\DD}{{\pmb{\mathscr{D}}}}
\newtheorem{thm}{Theorem}[section]
\newtheorem{assumption}[thm]{Assumption}
\newtheorem{rem}[thm]{Remark}
\newtheorem{prop}[thm]{Proposition}
\let\oldtocsection=\tocsection
\let\oldtocsubsection=\tocsubsection
\let\oldtocsubsubsection=\tocsubsubsection
\renewcommand{\tocsection}[2]{\hspace{0em}\oldtocsection{#1}{#2}}
\renewcommand{\tocsubsection}[2]{\hspace{1em}\oldtocsubsection{#1}{#2}}
\renewcommand{\tocsubsubsection}[2]{\hspace{2em}\oldtocsubsubsection{#1}{#2}}
\newcommand{\E}{\mathbb{E}}
\newcommand{\R}{\mathbb{R}}
\newcommand{\X}{\mathbb{X}}
\newcommand{\BX}{\mathbf{X}}
\newcommand{\BB}{\mathbf{B}}
\newcommand{\op}{\mathcal{P}}
\newcommand{\Z}{\mathbb{Z}}
\newcommand{\BW}{\mathbf{W}}
\newcommand{\supp}{\operatorname{supp}}
\newcommand{\var}{\textnormal{-var}}
\newcommand{\Sym}{\operatorname{Sym}}
\newcommand{\Hol}{\textnormal{-H{\"o}l}}
\newcommand{\eps}{\varepsilon}
\newcommand{\vertiii}[1]{{\left\vert\kern-0.25ex\left\vert\kern-0.25ex\left\vert #1
    \right\vert\kern-0.25ex\right\vert\kern-0.25ex\right\vert}}
\def\var{\text{-}\textrm{var}}
\def\Hol{\text{-}\textrm{H\"ol}}
 \newcommand{\dist}{\operatorname{dist}}
\newcommand{\Leb}{\operatorname{Leb}}
\newcommand{\Vol}{\operatorname{Vol}}
\newcommand{\cB}{{\mathcal{B}}}
\begin{document}

\title{Multiscale systems, homogenization, and rough paths}

\author[I. Chevyrev]{Ilya Chevyrev}
\address{I. Chevyrev,
Mathematical Institute,
University of Oxford,
Andrew Wiles Building,
Radcliffe Observatory Quarter,
Woodstock Road,
Oxford OX2 6GG,
United Kingdom}
\email{chevyrev@maths.ox.ac.uk}

\author[P.K. Friz]{Peter K. Friz}
\address{P.K. Friz, Institut f\"ur Mathematik, Technische Universit\"at Berlin, and Weierstra\ss --Institut f\"ur Angewandte Analysis und Stochastik, Berlin, Germany}
\email{friz@math.tu-berlin.de}

\author[A. Korepanov]{Alexey Korepanov}
\address{A. Korepanov,
Mathematics Institute,
University of Warwick,
Coventry, CV4 7AL,
United Kingdom}
\email{a.korepanov@warwick.ac.uk}

\author[I. Melbourne]{Ian Melbourne}
\address{I. Melbourne,
Mathematics Institute,
University of Warwick,
Coventry, CV4 7AL,
United Kingdom}
\email{i.melbourne@warwick.ac.uk}

\author[H. Zhang]{Huilin Zhang}
\address{H. Zhang, Institute of Mathematics, Fudan University, Shanghai, 200433, China}
\email{huilinzhang2014@gmail.com}

\dedicatory{Dedicated to Professor S.R.S Varadhan on the occasion of his 75th birthday}


\keywords{Fast-slow systems, homogenization, rough paths}

\begin{abstract}
In recent years, substantial progress was made towards understanding convergence of fast-slow deterministic systems
to stochastic differential equations. In contrast to more classical approaches, the assumptions on the fast flow are very mild.
We survey the origins of this theory and then revisit and improve the analysis of Kelly-Melbourne [Ann.\ Probab.\ Volume 44, Number 1 (2016), 479-520],
taking into account recent progress in $p$-variation and c{\`a}dl{\`a}g rough path theory.
\end{abstract}

\maketitle

\tableofcontents

\section{Introduction}
\label{sec:intro}

The purpose of this article is to survey and improve several recent developments in the theories of homogenization and rough paths, and the interaction between them.
From the side of homogenization, we are interested in the programme initiated by~\cite{MS11} and continued in~\cite{GM13} of studying fast-slow systems without mixing assumptions on the fast flow.
From the side of rough paths, we are interested in surveying recent extensions of the theory to the discontinuous setting~\cite{FS17, CF17x, FZ17} (see also~\cite{C15,Davie07,Kelly16,Williams01} for related results); the continuous theory, for the purposes of this survey,  is well-understood~\cite{FV10}.
The connection between the two sides first arose in~\cite{KM16,KM17} in which the authors were able to employ rough path techniques (in the continuous and discontinuous setting) to study systems widely generalising those considered in~\cite{MS11,GM13}.

In this article we address both continuous and discrete systems.
The continuous fast-slow systems take the form of the ODEs
\begin{align} \label{eq:ODE}
\frac{d}{dt} x_\eps &=a(x_\eps,y_\eps) +\eps^{-1}b(x_\eps,y_\eps)\;, \qquad
\frac{d}{dt} y_\eps =\eps^{-2}g(y_\eps)\;.
\end{align}
The equations are posed on $\R^d\times M$ for some compact Riemannian manifold $M$,
and $g:M\to TM$ is a suitable vector field.
We assume a fixed initial condition $x_\eps(0)=\xi$ for some fixed
$\xi\in\R^d$, while the initial condition for $y_\eps$ is drawn randomly from $(M,\lambda)$, where $\lambda$ is a Borel probability measure on $M$.

For the discrete systems, we are interested in dynamics of the form
\begin{equation}
\label{equ:discreteFastSlowGeneral}
X^{(n)}_{j+1} = X^{(n)}_j + n^{-1}a(X_j^{(n)},Y_j) + n^{-1/2}b(X_j^{(n)},Y_j)\;,
\quad
Y_{j+1} = TY_j\;.
\end{equation}
The equations are again posed on $\R^d\times M$, and $T:M\to M$ is an appropriate transformation.
As before, $X^{(n)}_0 = \xi \in \R^d$ is fixed and $Y_0$ is drawn randomly from a probability measure $\lambda$ on $M$.

Let $x_\eps : [0,1] \to \R^d$ denote either the solution to~\eqref{eq:ODE}, or the piecewise constant path $x_{\eps}(t)= X^{\floor{1/\eps^2}}_{\floor{t/\eps^2} }$, where $X_j^{(n)}$ is the solution to~\eqref{equ:discreteFastSlowGeneral}.
The primary goal of this article is to show convergence in law $x_\eps \to X$ in the uniform (or stronger) topology as $\eps \to 0$.
Here $X$ is a stochastic process, which in our situation will be the solution to an SDE.

Throughout this note we shall focus on the case where $a(x,y) \equiv a(x)$ depends only on $x$ and $b(x,y) \equiv b(x)v(y)$, where $v:M\to \R^m$ is an observable of $y$ and $b : \R^d \to L(\R^m,\R^d)$ This is precisely the situation considered in~\cite{KM16}. One restriction of the method in~\cite{KM16} is the use of H{\"o}lder rough path topology which necessitates moment conditions on the fast dynamics which are suboptimal from the point of view of homogenization.
Our main insight is that switching from $\alpha$-H{\"o}lder to $p$-variation rough path topology allows for optimal moment assumptions on the fast dynamics. The non-product case was previously handled, also with suboptimal moment assumptions, in \cite{KM17} and also \cite{BC17}, using infinite-dimensional and flow-based rough paths respectively. We briefly discuss this and some other extensions in Section~\ref{sec:exts}, leaving  
a full analysis of the general (non-product) case, under equally optimal moment assumptions, to a forthcoming artice \cite{CFKMZPrep}.

\medskip

{\bf An example of fast dynamics.} There are many examples to which the results presented here apply, however we feel it is important to have a concrete (and simple to state) example in mind from the very beginning. In this regard, Pomeau \& Manneville~\cite{PomeauManneville80} introduced a class of maps that exhibit {\em intermittency} as part of their study of turbulent bursts.
The most-studied example~\cite{LSV99} is the one-dimensional map $T:M\to M$, $M=[0,1]$,
given by
\begin{align} \label{eq:LSV}
Ty=\begin{cases} y(1+2^\gamma y^\gamma) & y<\frac12 \\ 2y-1 & y\ge\frac12
\end{cases}.
\end{align}
Here $\gamma\ge0$ is a parameter.  When $\gamma=0$ this is the doubling map $Ty=2y\bmod1$ which is uniformly expanding (see Section~\ref{sec:chaos}).
For $\gamma>0$, there is a neutral fixed point at $0$ ($T'(0)=1$) which has more and more influence as $\gamma$ increases.
For each value of $\gamma\in[0,1)$, there is a unique absolutely continuous invariant probability measure $\mu$.  This measure is ergodic and equivalent (in fact equal when $\gamma=0$) to the Lebesgue measure.

Suppose that $v \colon M \to \R^m$ is H\"older continuous and $\int v \, d\mu = 0$.
Let
\[
  v_n=\sum_{0\le j<n}v\circ T^j.
\]
By~\cite{LSV99,Young99}, for $\gamma \in [0,\frac{1}{2})$, the random variable
$n^{-1/2} v_n$, defined on the probability space $(M, \mu)$, converges in law to a normal distribution.
(Convergence in law also holds on $(M, \Leb)$.)
In other words, the \emph{central limit theorem} (CLT) holds.
However, by~\cite{Gouezel04}, the CLT fails for $\gamma>\frac12$ (instead there is convergence to a stable law of index $\gamma^{-1}$); for $\gamma=\frac12$ the CLT holds but with non-standard normalization $(n\log n)^{-1/2}$.
Hence from now on we restrict to $\gamma \in [0, \frac{1}{2})$.

Define
\[
S_n=\sum_{0\le i\le j<n}(v\circ T^i)\otimes(v\circ T^j)\;.
\]
The approach to homogenization of fast-slow systems in~\cite{KM16}
requires convergence of the pair of stochastic processes
$\bigl(n^{-1/2} v_{\lfloor n t \rfloor}, n^{-1} S_{\lfloor n t\rfloor}\bigr)$ to
an enhanced Brownian motion, which is established for all $\gamma \in [0,\frac12)$.
(See Sections~\ref{sec:chaos} and~\ref{sec:chaos2}.)
Further, the approach based on H\"older rough path theory requires that
$\|v_n\|_{2q}=O(n^{1/2})$ and
$\|S_n\|_{q}=O(n)$ for some $q>3$.  These estimates are established in~\cite{KM16} for $\gamma\in[0,\frac{2}{11})$.  An improvement in~\cite{KKMprep} covers
$\gamma\in[0,\frac14)$ and this is known to be sharp~\cite{MN08,M09b}.
Hence the parameter regime $\gamma\in[\frac14,\frac12)$ is beyond the H\"older rough path theory.  In contrast, the $p$-variation rough path theory described here requires the moment estimates only for some $q>1$ and~\cite{KKMprep} applies for all $\gamma\in[0,\frac12)$.  Hence we are able to prove homogenization theorems in the full range
$\gamma\in[0,\frac12)$.

\bigskip

The remainder of this article is organized as follows. In Section~\ref{sec:ER}, we discuss the WIP and chaotic dynamics, and several situations of homogenization where rough path theory is not required. In Section~\ref{sec:RPs}, we introduce the parts of rough path theory required in the Brownian motion setting of this paper. This is applied to fast-slow systems in Section~\ref{sec:appl}. In Section~\ref{sec:exts}, we mention extensions and related work.

\bigskip

{\bf Acknowledgements:} I.C. is funded by a Junior Research Fellowship of St John's College, Oxford.
P.K.F. acknowledges partial support from the ERC, CoG-683164, the Einstein Foundation Berlin, and DFG research unit FOR2402.
A.K. and I.M. acknowledge partial support from the European Advanced Grant StochExtHomog (ERC AdG 320977).
H.Z. is supported by the Chinese National Postdoctoral Program for Innovative Talents No: BX20180075.
H.Z. thanks the Institute für Mathematik, TU Berlin, for its hospitality.

\section{Emergence of randomness in deterministic dynamical systems}
\label{sec:ER}

In this section, we review a simplified situation where the ordinary weak invariance principle (see below) suffices, and rough path theory is not required.

\subsection{The weak invariance principle} \label{sec:WIP}

Consider a family of stochastic processes indexed by $\eps \in (0,1)$, say $W_\eps = W_\eps (t,\omega)$ with values in $\R^m$. We are interested in convergence of the respective laws.
In the case of continuous sample paths (including smooth or piecewise linear) we say that the {\it weak invariance principle} (WIP) holds if
\begin{equation*}  \label{equ:WIPinC}
          W_{\eps}  \to_{w} W  \text{ in $C([0,1],\R^m) $ as $\eps \to 0$}\;,
\end{equation*}
where $W$ is an $m$-dimension Brownian motion with covariance matrix $\Sigma$; in the case of c{\`a}dl{\`a}g sample paths (including piecewise constant) we mean
\begin{equation*}  \label{equ:WIPinD}
          W_{\eps}  \to_{w} W  \text{ in $D([0,1],\R^m) $ as $\eps \to 0$}\;,
\end{equation*}
where $C$ resp.\ $D$ denotes the space of continuous resp.\ c{\`a}dl{\`a}g paths, equipped with the uniform topology.\footnote{Since our limit processes here - a Brownian motion - is continuous,
 there is no need to work with the Skorokhod topology on $D$.} For notational simplicity only, assume ($W_\eps)$ are defined on a common probability space $(\Omega,F,\lambda)$; we then write $W_{\eps}  \to_\lambda W$ to indicate convergence in law, i.e. $\E_\lambda [ f  (W_\eps ) ]\to E_\lambda [ f (W) ]$ for all bounded continuous functionals.

In many cases, one has convergence of second moments. This allows to compute the covariance of the limiting Brownian motion,
\begin{equation} \label{Equ:C2M}
  \Sigma = \E ( W (1) \otimes W (1) )
  = \lim_{\eps \to 0} \E_\lambda (W_{\eps} (1) \otimes W_\eps(1))
  \;.
\end{equation}

The WIP is also known as the {\it functional central limit theorem}, with the CLT for finite-dimensional distributions as a trivial consequence. Conversely, the CLT for f.d.d.\ together with tightness gives the WIP.

{\it Donsker's invariance principle}~\cite{Donsker51} is the prototype of a WIP: consider a centered $m$-dimensional random walk $Z_n := \xi_1 + \cdots + \xi_n$, with $\R^m$-valued IID increments of zero mean and finite covariance $\Sigma$. Extend to either a continuous piecewise linear process or c{\`a}dl{\`a}g piecewise constant process $(Z_t: t \ge 0)$. Then the WIP holds for the rescaled random walk
$$
Z^\eps_t := \eps Z_{t / \eps^2} \ ,
$$
and the limiting Brownian motion has covariance $\Sigma$. This result has an important generalization to a {\it (functional) martingale CLT}: using similar notation, assume $(Z_n)$ is a zero mean $L^2$-martingale with stationary and ergodic increments $(\xi_i)$. Then, with the identical rescaling, the WIP holds true, with convergence of second moments~\cite{Brown71} (or e.g.~\cite[Thm.~18.3]{Billingsley}).

Another interesting example is given by {\it physical Brownian motion} with positive mass $\eps^2 >0$ and friction matrix $M$, where the trajectory is given by
$$
           X^\eps_t := \eps \int_0^{t / \eps^{2}} Y_s \, ds
$$
and $Y$ follows an $m$-dimensional OU process, $dY = - MY dt + dB, \ Y_0 = y_0$. Here, $M$ is an $m \times m$-matrix whose spectrum has positive real part, and $B$ is an $m$-dimensional standard Brownian motion. One checks without difficulties~\cite{PavliotisStuart08, FGL15} that a WIP holds, even in the sense of weak convergence in the H\"older space $C^\alpha ([0,1],\R^m)$ with any $\alpha < 1/2$. The covariance matrix of the limiting Brownian is given by $\Sigma = M^{-1} (M^{-1})^T$, as can be seen from the Newton dynamics $\eps^2 \ddot X^\eps = - M \dot X^\eps + \dot B$ with white noise $\dot B$.

\bigskip

\noindent
Finally, {\it sufficiently chaotic deterministic dynamical systems} are a rich source of WIPs.
To fix ideas, consider a compact Riemannian manifold $M$ with a Lipschitz vector field $g$ and corresponding flow $g_t$, for which there is an ergodic, invariant Borel probability measure $\mu$ on $M$. We regard $(g_t)$ as an $M$-valued stochastic process, given by $ g_t( y_0 )$ with initial condition $y_0$ distributed according to $\lambda$, another Borel probability measure on $M$. (It is possible but not necessary to have $\lambda = \mu$.) Consider further a suitable observable $v: M\to\R^m$ with $\E_\mu v = 0$.
A family of $C^1$-processes $(\Wve)_{\eps >0}$, with values in $\R^m$, is then given by
\begin{align*}
 \Wve(t)=\eps\int_0^{t\eps^{-2}}v \circ g_{s}\,ds \ .
\end{align*}
As will be reviewed in Section~\ref{sec:chaos} below, also in a discrete time setting, in many situations a WIP holds. That is,
$$
          \Wve  \to_\lambda W     \text{ in $C([0,1],\R^m) $ as $\eps \to 0$}\ .
$$
Typically one also has convergence of second moments, so that $W$ is a Brownian motion with covariance $\Sigma$ given by \eqref{Equ:C2M}.
Under (somewhat restrictive) assumptions on the decay of correlations, this can be simplified
to a Green-Kubo type formula
$$
    \Sigma = \int_0^\infty  \E_\mu \{ v \otimes ( v \circ g_s ) + (v \circ g_s ) \otimes v \} ds \ .
$$

\subsection{First applications to fast-slow systems}
\label{sec:RPappl}

In the setting of deterministic, sufficiently chaotic dynamical systems discussed in the previous paragraph, Melbourne--Stuart~\cite{MS11} consider the fast-slow system posed on $\R^d\times M$ (with $m=d$),
\begin{align*}
\dot x_\varepsilon & = a(x_\varepsilon,y_\varepsilon) + \varepsilon^{-1}v(y_\varepsilon) \;, \qquad
\dot y_\varepsilon  = \varepsilon^{-2}g(y_\varepsilon) \;,
\end{align*}
with deterministic initial data $x_\varepsilon (0) = x_0$ and $y_\varepsilon (0)$ sampled randomly with probability $\lambda$.
We wish to study the limiting dynamics of the slow variable $x_\eps$.
Assuming for simplicity $a(x,y)=a(x)$, the basic observation is to rewrite
$$
     \dot x_\varepsilon = a(x_\varepsilon) + \dot \Wve \ .
$$
We see that the noise $\Wve$ enters the equation in an additive fashion and one checks without difficulty that the ``It\^o-map'' $\Wve \mapsto x_\eps$ extends continuously (w.r.t.\ uniform convergence) to any continuous noise path. Now assume validity of a WIP, i.e. $ \Wve \to_\lambda W$. Then, together with continuity of the It{\^o}-map, one obtains the desired limiting SDE dynamics of the slow variable as
$$
    dX = dW + a(X)dt\;.
$$
In the general case when $a$ depends on $x_\eps$ and $y_\eps$, the drift term is given by $\bar a (x) = \int_M a (x,y) \, d\mu (y)$

In subsequent work, Gottwald--Melbourne~\cite{GM13} consider the one-dimensional case $d=m=1$ with
\begin{align*}
\dot x_\varepsilon &= a(x_\varepsilon, y_\varepsilon) + \varepsilon^{-1}b(x_\varepsilon) v(y_\varepsilon) \;,
\qquad
\dot y_\varepsilon = \varepsilon^{-2}g(y_\varepsilon)\;.
\end{align*}
Again, taking $a(x,y)=a(x)$ for simplicity, the limiting SDE turns out to be of Stratonovich form
$$
       dX = a(X)dt + b(X) \circ dW \; .
$$
The essence of the proof is a robust representation of such SDEs.
Indeed, taking $a \equiv 0$ for notational simplicity, an application of the (first order) Stratonovich chain rule exhibits the explicit solution as $X_t = e^{W_t b}(X_0)$, where $e^{W_tb}$ denotes the flow at ``time'' $W_t \in \R$ along the vector field $b$; this clearly depends continuously on $X_0$ and $W$ w.r.t.\ uniform convergence.
Hence, as in the additive case, the problem is reduced to having a WIP. This line of reasoning
can be pushed a little further, namely to the case
$\dot x_\varepsilon = a(x_\varepsilon, y_\varepsilon) + \varepsilon^{-1} V(x_\varepsilon) v(y_\varepsilon)$
with commuting vector fields $V=(V_1, ..., V_m)$, a.k.a.\ the Doss--Sussmann method, but
fails for general vector fields, not to mention the non-product case when $V(x)v(y)$ is replaced by $b(x,y)$. This is a fundamental problem which is addressed by Lyons' theory of rough paths.

Gottwald-Melbourne~\cite{GM13} consider also discrete time fast-slow systems
posed on $\R^d\times M$,
\[
X^{(n)}_{j+1} = X^{(n)}_j + n^{-1}a(X_j^{(n)},Y_j) + n^{-1/2}b(X_j^{(n)})v(Y_j)\;,
\quad
Y_{j+1} = TY_j\;,
\]
Again we suppose for notational simplicity that $a(x,y)=a(x)$.
We continue to suppose that $\mu$ is an ergodic $T$-invariant probability measure on $M$ and that $\E_\mu v=0$.  Also, $\lambda$ is another probability measure on $M$.
Recall that $x_{\eps}(t)= X^{\floor{1/\eps^2}}_{\floor{t/\eps^2} }$
and assume validity of a WIP, i.e.\ $W_\eps \to_\lambda W$.
When $b\equiv1$, it is shown in~\cite{GM13} that
$x_\eps\to_\lambda X$ where $dX=a(X)\,dt+dW$.
For $d=m=1$, under additional mixing assumptions it is shown that
$x_\eps\to_\lambda X$ where $dX=\tilde a(X)\,dt+b(X)\,dW$
with
\[
\tilde a(x)=a(x)+b(x)b'(x)\sum_{n=1}^\infty \E_\mu ( v\;v\circ T^n).
\]

\subsection{Chaotic dynamics: CLT and the WIP}
\label{sec:chaos}

In this subsection, we describe various classes of dynamical systems with good statistical properties, focusing attention on the CLT and WIP.

Somewhat in contrast to rough path theory,
the ergodic theory of smooth dynamical systems is much simpler for discrete time than for continuous time -- indeed the continuous time theory proceeds by reducing to the discrete time case.
Also, the simplest examples are noninvertible.
The reasons behind this are roughly as follows.
Since the papers of Anosov~\cite{Anosov67} and Smale~\cite{Smale67}, it is has been understood that the way to study dynamical systems is to exploit expansion and contraction properties.
The simplest systems are uniformly expanding; these are necessarily discrete time and noninvertible.
Anosov and Axiom~A (uniformly hyperbolic) diffeomorphisms have uniformly contracting and expanding directions.
Anosov and Axiom~A (uniformly hyperbolic) flows have a neutral time direction and are uniformly contracting and expanding in the remaining directions.
The neutral direction makes flows much harder to study.
The mixing properties of uniformly hyperbolic flows are still poorly understood (see for example the review in~\cite{rapid}); fortunately the CLT and WIP do not rely on mixing.

Accordingly, we consider in turn expanding maps, hyperbolic diffeomorphisms, and hyperbolic flows, in Subsections~\ref{sec:NUE},~\ref{sec:NUH} and~\ref{sec:flow}
respectively.
This includes the uniform cases mentioned in the previous paragraph, but also dynamical systems that are nonuniformly expanding/hyperbolic, which is crucial for incorporating large classes of examples.

\subsubsection{Uniformly and nonuniformly expanding maps} \label{sec:NUE}


The CLT and WIP are proved in~\cite{HofbauerKeller82,Keller85} for large classes of dynamical systems
(in fact, they prove a stronger statistical property, known as the almost sure invariance principle).  For recent developments in this direction, see~\cite{CunyMerlevede15,KKM16} and references therein.
In particular, the CLT and WIP hold for smooth uniformly expanding maps and for systems modelled by Young towers with summable decay of correlations~\cite{Young99}, which provide a rich source of examples including the intermittent maps~\eqref{eq:LSV}.

Here we review the results in various situations, focusing on various issues that are of importance for fast-slow systems: CLT, WIP, covariance matrices, nondegeneracy.
Also, we mention the notions of spectral decomposition, mixing up to a finite cycle, basins of attraction, and strong distributional convergence, which are necessary for understanding how the theory is applied.
\smallskip

{\it Smooth uniformly expanding maps.}
The simplest chaotic dynamical system is the doubling map
$T:M\to M$, $M=[0,1]$, given by $Ty=2y\bmod 1$.
More generally, let $T:M\to M$ be a $C^2$ map on a
compact Riemannian manifold $M$ and let $\cB$ be the $\sigma$-algebra of Borel sets.
The map is {\em uniformly expanding} if
there are constants $C>0$, $L>1$ such that
$\|DT^n|_y z\|\ge CL^n\|z\|$ for all $y\in M$, $z\in T_yM$.
By~\cite{KrzSzlenk69},
there is a unique ergodic $T$-invariant Borel probability measure~$\mu$ on $M$ equivalent to the volume measure.
(Recall that $\mu$ is $T$-invariant
if $\mu(T^{-1}B)=\mu(B)$ for all $B\in \cB$, and is ergodic if $\mu(B)=0$ or $\mu(B)=1$ for all $B\in \cB$ with $TB\subset B$.)
By Birkhoff's ergodic theorem~\cite{Birkhoff31}
(an extension of the strong law of large numbers)
the sum
\(
v_n=\sum_{0 \leq j < n} v\circ T^j
\)
satisfies $n^{-1}v_n\to \int_M v\,d\mu$ a.e.\ for all $v\in L^1$.

To make further progress it is necessary to impose some regularity on the observable~$v$; the CLT fails in general for continuous observables.
Hence, we suppose that $v$ is H\"older.
Specifically, fix $\kappa\in(0,1)$ and
let $C^\kappa_0(M)$ be the space of $C^\kappa$ observables
$v:M\to\R$ with $\int_M v\,d\mu=0$.
It is well known~\cite{Bowen75,Ruelle78,Sinai72} that
there are constants $\gamma\in(0,1)$ and $C>0$ depending only on $T$ and $\kappa$ such that
\begin{align} \label{eq:decay}
\Big|\int_M v\,w\circ T^n\,d\mu\Big|\le C\gamma^n\|v\|_{C^\kappa} \|w\|_1
\quad\text{for all $v\in C^\kappa_0(M)$, $w\in L^1$, $n\ge1$}.
\end{align}
An immediate consequence is that the limit
$$
\sigma^2 := \lim_{n\to\infty}n^{-1}\int_M v_n^2\,d\mu\;,
$$
exists
and that $$\sigma^2=\int_M v^2\,d\mu+2\sum_{n=1}^\infty \int_M v\,v\circ T^n\,d\mu \ . $$

For $1\le p\le\infty$, we recall that the Koopman operator $U:L^p\to L^p$ is given by $Uv=v\circ T$ and that the transfer operator $P:L^q\to L^q$ is given by
$\int_M Pv\;w\,d\mu=\int_M v\;w\circ T\,d\mu$ for $v\in L^q$, $w\in L^p$, where $p^{-1}+q^{-1}=1$.
These operators satisfy $\|U\|_p=1$ and
$\|P\|_p\le 1$ for $1\le p\le\infty$.  In addition, $PU=I$ and
$UP=\E(\,\cdot \,| T^{-1}\cB)$.
Note that property~\eqref{eq:decay} is equivalent to
\begin{align} \label{eq:P}
\|P^nv\|_\infty \le C\gamma^n\|v\|_{C^\kappa}
\quad\text{for all $v\in C^\kappa_0(M)$, $n\ge1$}.
\end{align}

Following the classical approach of Gordin~\cite{Gordin69}, we define $\chi=\sum_{j=1}^\infty P^jv$ and
$m=v-\chi\circ T+\chi$.  By~\eqref{eq:P}, $m,\,\chi\in L^\infty$.
It follows from the definitions that $m\in\ker P$ and hence
that $n^{-1}\int_M m_n^2\,d\mu=\int_M m^2\,d\mu$ for all $n$.
Since
$$v_n-m_n=\chi\circ T^n-\chi \in L^\infty \ , $$
it follows that $\int_M m^2\,d\mu=\sigma^2$.

Moreover,
$\E(m|T^{-1}\cB)=UPm=0$, so $\{m\circ T^n,\,n\ge0\}$ is an $L^\infty$ stationary ergodic reverse martingale difference sequence.  Hence, standard martingale limit theorems apply.  In particular, by~\cite{Billingsley61,Mcleish74} we obtain the CLT for $m$ and thereby~$v$:
\[
n^{-1/2}v_n\to_\mu N(0,\sigma^2) \quad \text{as $n\to\infty$}.
\]
We refer to the decomposition $v=m+\chi\circ T-\chi$ as an $L^\infty$ {\em martingale-coboundary decomposition}, since $m$ is a reverse martingale increment.  The coboundary term $\chi\circ T-\chi \in L^\infty$ telescopes under iteration and therefore is often negligible.
Next, define the process $W_n\in C[0,1]$ by setting
$V_n(t)=n^{-1/2}v_{nt}$ for $t=0,1/n,2/n\dots$ and linearly interpolating.
By~\cite{Brown71,Mcleish74} we obtain the WIP:
\[
W_n\to_\mu W \quad\text{in $C[0,1]$ as $n\to\infty$,}
\]
where $W$ is a Brownian motion with variance $\sigma^2$.

The CLT and WIP are said to be {\em degenerate} if $\sigma^2=0$.
We now show that this is extremely rare.
Since $v=m+\chi\circ T-\chi$ and $\sigma^2=\int_M m^2\,d\mu$, we obtain that $\sigma^2=0$ if and only if $v=\chi\circ T-\chi$ where $\chi\in L^\infty$.
Moreover, the series $\chi=\sum_{n=1}^\infty P^nv$ converges in $C^\kappa$ (see for example~\cite{Ruelle89}), so in particular $\chi$ is continuous.
Let $C^\kappa_{\rm deg}$ consist of observables $v\in C^\kappa_0(M)$ with $\sigma^2=0$.

\begin{prop} \label{prop:deg}
$C^\kappa_{\rm deg}$ is a closed, linear subspace of infinite codimension in $C^\kappa_0(M)$.
\end{prop}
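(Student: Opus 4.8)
The plan is to realise $C^\kappa_{\rm deg}$ as the kernel of an explicit bounded operator---which makes linearity and closedness immediate---and then to prove infinite codimension by separating $C^\kappa_{\rm deg}$ from arbitrarily many periodic-orbit averages. First I would record the characterisation already implicit above: for $v\in C^\kappa_0(M)$ the function $\chi=\sum_{j\ge1}P^jv$ converges in $C^\kappa$, the martingale part is $m=v-\chi\circ T+\chi$, and $\sigma^2=\int_M m^2\,d\mu$; since $m\in C^\kappa$ is continuous and $\mu$ (being equivalent to the volume) has full support, $\sigma^2=0$ holds iff $m\equiv0$. Thus $C^\kappa_{\rm deg}=\ker(v\mapsto m)$. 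Linearity is then obvious ($v\mapsto\chi$ is linear and the coboundaries $\{\psi\circ T-\psi:\psi\in C^\kappa(M)\}$ form a subspace). For closedness I would invoke the quasi-compactness of $P$ on $C^\kappa(M)$ with simple leading eigenvalue $1$---the very estimate that makes $\sum_jP^jv$ converge in $C^\kappa$---to see that $v\mapsto\chi$ is a \emph{bounded} linear map $C^\kappa_0(M)\to C^\kappa(M)$; since $T$ is Lipschitz, $\psi\mapsto\psi\circ T$ is bounded on $C^\kappa(M)$, hence $v\mapsto m$ is bounded and its kernel $C^\kappa_{\rm deg}$ is closed. (Equivalently, $v\mapsto\sigma^2(v)=\|m\|_{L^2(\mu)}^2$ is continuous and $C^\kappa_{\rm deg}=\{\sigma^2=0\}$.)

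For infinite codimension I would use periodic orbits. A $C^2$ uniformly expanding map has $(\deg T)^n$ points fixed by $T^n$ for every $n$, so there are infinitely many distinct periodic orbits; fix $N$, choose pairwise distinct periodic orbits $O_1,\dots,O_N\subset M$, and set $\ell_i(v)=\sum_{y\in O_i}v(y)$, a bounded linear functional on $C^\kappa(M)$. If $v=\chi\circ T-\chi\in C^\kappa_{\rm deg}$ then, because $T$ permutes each $O_i$, the sum telescopes and $\ell_i(v)=0$; hence $C^\kappa_{\rm deg}\subseteq\bigcap_{i\le N}\ker(\ell_i|_{C^\kappa_0})$. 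It then suffices to show $\ell_1|_{C^\kappa_0},\dots,\ell_N|_{C^\kappa_0}$ are linearly independent, since then $\dim\bigl(C^\kappa_0(M)/C^\kappa_{\rm deg}\bigr)\ge N$ for all $N$. If $\sum_ic_i\ell_i$ vanished on $C^\kappa_0(M)$, then splitting an arbitrary $v\in C^\kappa(M)$ into its $\mu$-mean and a mean-zero part would give $\int v\,d\nu=\nu(M)\int v\,d\mu$ for all such $v$, where $\nu=\sum_ic_i\sum_{y\in O_i}\delta_y$; by density of $C^\kappa(M)$ in $C(M)$ this forces $\nu=\nu(M)\,\mu$ as signed measures, which is impossible (as $\mu$ is non-atomic and $\nu$ atomic) unless $\nu(M)=0$ and $\nu=0$, whereupon disjointness of the $O_i$ gives $c_1=\dots=c_N=0$.

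The computations are routine; the point that genuinely needs care is that the cohomological equation must be solved \emph{continuously}: one must use that $\chi=\sum_jP^jv$ lies in $C^\kappa\subset C(M)$, not merely in $L^\infty$, for otherwise the point evaluations defining $\ell_i$ are meaningless on $C^\kappa_{\rm deg}$ and the reduction ``$\sigma^2=0\Rightarrow v$ is a bona fide coboundary'' breaks down. This continuity is precisely the $C^\kappa$-convergence of $\sum_jP^jv$ recalled above; the only other dynamical input is the standard fact that uniform expansion yields infinitely many periodic orbits.
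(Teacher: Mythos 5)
Your proof is correct and follows essentially the same route as the paper: the key point in both is that $\sigma^2=0$ forces $v=\chi\circ T-\chi$ with $\chi$ \emph{continuous} (via $C^\kappa$-convergence of $\sum_j P^jv$), so that summing $v$ over each periodic orbit yields vanishing linear functionals, of which there are infinitely many. You additionally spell out two points the paper leaves implicit---closedness and linearity via realising $C^\kappa_{\rm deg}$ as the kernel of the bounded map $v\mapsto m$, and the linear independence of the periodic-orbit functionals (your atomic-versus-nonatomic measure argument)---both of which are correct and genuinely needed for a complete proof.
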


\begin{proof}
Suppose that $v\in C^\kappa_{\rm deg}$ so $v=\chi\circ T-\chi$ where $\chi$ is continuous.  Iterating, we obtain $v_k=\chi\circ T^k-\chi$.
Hence if $y\in M$ is a period $k$ point, i.e.\ $T^ky=y$, then
$v_k(y)=0$.  Since periodic points are dense in $M$~\cite{Smale67} we obtain infinitely many linear constraints on~$v$.
\end{proof}

Analogous results hold for vector-valued observables $v:M\to\R^m$.
Let $v\in C^\kappa_0(M,\R^m)$.
Then
\begin{align} \label{eq:Sigma1}
\lim_{n\to\infty}n^{-1}\int_M v_n\otimes v_n\,d\mu=\Sigma,
\end{align}
where
$\Sigma\in\R^m\otimes \R^m$ is symmetric and positive semidefinite,
and
\begin{align} \label{eq:Sigma2}
\Sigma=\int_M v\otimes v\,d\mu+\sum_{n=1}^\infty \int_M
\{v\otimes(v\circ T^n)+(v\circ T^n)\otimes v\}\, d\mu.
\end{align}
Define $v_n\in\R^m$ and $W_n\in C([0,1],\R^m)$ as before.
By the above results, $n^{-1/2}c^Tv_n$ converges in distribution to a normal distribution with variance $c^T\Sigma c$ for each $c\in\R^m$, and hence by
Cramer-Wold we obtain the multi-dimensional CLT
$n^{-1/2}v_n\to_d N(0,\Sigma)$.
Similarly, $W_n\to_w W$ in $C([0,1],\R^m)$ where $W$ is $m$-dimensional Brownian motion with covariance $\Sigma$.
Finally $c^T\Sigma c=0$ for $c\in\R^m$ if and only if $c^Tv\in C^\kappa_{\rm deg}$.
Hence the degenerate case $\det\Sigma=0$ occurs only on a closed subspace of infinite codimension.

Since the CLT is a consequence of the WIP, generally we only mention the WIP in the remainder of this subsection.

Returning to the ergodic theorem, if $v\in L^1$, then $n^{-1}v_n(y_0)\to \int_M v\,d\mu$ for $\mu$ almost every initial condition $y_0\in M$.
Since $\mu$ is equivalent to volume, we could equally choose the initial condition $y_0$ randomly with respect to volume, which is perhaps more natural since volume is the intrinsic measure on $M$.
Similar considerations apply to the WIP.  Based on ideas of~\cite{Eagleson76},
it follows from~\cite[Cor.~2]{Zweimuller07} that
if $v\in C^\kappa_0(M,\R^m)$ then
$W_n\to_\lambda W$ in $C([0,1],\R^m)$ for every absolutely continuous Borel probability measure $\lambda$ (including $\mu$ and volume as special cases).
This property is often called {\em strong distributional convergence}~\cite{Zweimuller07}.
Of course $C^\kappa_0(M,\R^m)$ is defined using $\mu$ regardless of the choice of $\lambda$.
\smallskip

{\it Piecewise expanding maps.}
There are numerous extensions of the above arguments in various directions.
For example,
Keller~\cite[Thm.~3.5]{Keller85} considers piecewise $C^{1+\eps}$ transformations $T:M\to M$, $M=[0,1]$,  with finitely many monotone branches and $|T'|\ge L$ for some $L>1$.  There exists an ergodic $T$-invariant absolutely continuous probability measure (acip) $\mu$.  Let $\Lambda=\supp\mu$.
Recall that $\Lambda$ is {\em mixing} if
$\lim_{n\to\infty}\mu(T^{-n}A\cap B)=\mu(A)\mu(B)$ for all
measurable sets $A,B\subset \Lambda$.
In this case, by~\cite[Thm.~3.3]{Keller85},
condition~\eqref{eq:decay} holds (with $M$ replaced by $\Lambda$).
Hence we obtain the WIP for all $v\in C^\kappa_0(\Lambda,\R^m)$
with $\Sigma$ given as in~\eqref{eq:Sigma1} and~\eqref{eq:Sigma2}.
Also $\det\Sigma=0$ if and only if there exists $c\in\R^m$ such that
$c^Tv=\chi\circ T-\chi$ for some $\chi:\Lambda\to\R$ in $L^\infty$.

If $\Lambda$ is not mixing, then condition~\eqref{eq:decay} fails.
Nevertheless, by~\cite[Thm.~3.3]{Keller85} $\Lambda$ is {\em mixing up to a finite cycle}: we can write $\Lambda$ as a disjoint union $\Lambda=A_1\cup\cdots\cup A_k$ for some $k\ge2$ such that $T$ permutes the $A_j$ cyclically and $T^k:A_j\to A_j$ is mixing with respect to $\mu|A_j$
for each $j$.  Moreover, condition~\eqref{eq:decay} holds for the map
$T^k:A_j\to A_j$.  It is easily verified that the WIP goes through for $T:\Lambda\to\Lambda$ and that the limit formula~\eqref{eq:Sigma1} for $\Sigma$ remains valid.  (Of course in the nonmixing case,~\eqref{eq:Sigma2} no longer makes sense.)

The {\em basin of attraction} of the ergodic probability measure $\mu$ is defined as
\[
B_\mu=\{y\in M:
\lim_{n\to\infty} n^{-1}v_n(y)={\textstyle\int}_M v\,d\mu
\quad\text{for all $v:M\to\R$ continuous}\}.
\]
(The ergodic theorem guarantees that, modulo a zero measure set, $\supp\mu\subset B_\mu$, but in general $B_\mu$ can be much larger.)
The acip $\mu$ need not be unique but by~\cite[Thm.~3.3]{Keller85} there is {\em a spectral decomposition}: there exist finitely many absolutely continuous ergodic invariant probability measures $\mu_1,\dots,\mu_k$ such that
$\Leb(B_{\mu_1}\cup\cdots\cup B_{\mu_k})=1$
and the results described above for $\mu$ hold separately for each of $\mu_1,\dots,\mu_k$.

For related results on $C^2$ one-dimensional maps with infinitely many branches, we refer to~\cite{Rychlik83}.  For higher-dimensional piecewise smooth maps, see for example~\cite{BuzziMaume02,Saussol00}.
Again there is a spectral decomposition into finitely many attractors which are mixing up to a finite cycle.  After restricting to an appropriate subset and considering a suitable iterate of $T$, condition~\eqref{eq:decay} holds and we obtain the WIP etc as described above.

In general, extra work is required to deduce that degeneracy is infinite codimension as in Proposition~\ref{prop:deg}.  We note that the approach in~\cite{BuzziMaume02} fits within the Young tower approach of~\cite{Young98,Young99} where it is possible to recover Proposition~\ref{prop:deg} as described below.
\smallskip

{\it Nonuniformly expanding maps.}
An important method for studying nonuniformly expanding maps $T:M\to M$ is to construct a {\em Young tower} as in~\cite{Young99}.  This incorporates the maps~\eqref{eq:LSV} discussed in the introduction.

Let $M$ be a bounded metric space with
finite Borel measure $\rho$ and let $T:M\to M$ be a nonsingular
transformation ($\rho(T^{-1}B)=0$ if and only if $\rho(B)=0$ for $B\in\cB$).
Let $Y\subset M$ be a subset of positive measure, and
let~$\alpha$ be an at most countable measurable partition
of $Y$ with $\rho(a)>0$ for all $a\in\alpha$.    We suppose that there is an
integrable
{\em return time} function $\tau:Y\to\Z^+$, constant on each $a$ with
value $\tau(a)\ge1$, and constants $L>1$, $\kappa\in(0,1)$, $C_0>0$,
such that for each $a\in\alpha$,
\begin{itemize}
\item[(1)] $F=T^\tau$ restricts to a (measure-theoretic) bijection from $a$  onto $Y$.
\item[(2)] $d(Fx,Fy)\ge L d(x,y)$ for all $x,y\in a$.
\item[(3)] $d(T^\ell x,T^\ell y)\le C_0d(Fx,Fy)$ for all $x,y\in a$,
$0\le \ell <\tau(a)$.
\item[(4)] $\zeta_0=\frac{d\rho|_Y}{d\rho|_Y\circ F}$
satisfies $|\log \zeta_0(x)-\log \zeta_0(y)|\le C_0d(Fx,Fy)^\kappa$ for all
\mbox{$x,y\in a$}.
\end{itemize}
The {\em induced map} $F=T^\tau:Y\to Y$ has
a unique acip $\mu_Y$.

\begin{rem}For the intermittent maps~\eqref{eq:LSV}, we can take $Y=[\frac12,1]$ and we can choose $\tau$ to be the first return to $Y$.
In general, it is not required that $\tau$ is the first return time to $Y$.
\end{rem}

Define the Young tower~\cite{Young99},
$\Delta=\{(y,\ell)\in Y\times\Z:0\le\ell\le \tau(y)-1\}$,
and the tower map
\begin{align} \label{eq:Delta}
f:\Delta\to\Delta, \qquad
f(y,\ell)=\begin{cases} (y,\ell+1), & \ell\le \tau(y)-2
\\ (Fy,0), & \ell=\tau(y)-1 \end{cases}.
\end{align}
The projection
$\pi_\Delta:\Delta\to \Lambda$, $\pi_\Delta(y,\ell)=T^\ell y$, defines a semiconjugacy
from $f$ to $T$.  Define the ergodic acip
$\mu_\Delta=\mu_Y\times\{{\rm counting}\}/\int_Y \tau\,d\mu_Y$ for $f:\Delta\to\Delta$.
Then
$\mu=(\pi_\Delta)_*\mu_\Delta$ is an ergodic acip for $T:M\to M$
and $\mu$ is mixing up to a finite cycle.

Young~\cite{Young99} proved that if $\mu$ is mixing and
$\mu_Y(y\in Y:\tau(y)>n)=O(n^{-(\beta+1)})$ for some $\beta>0$,
then
\begin{align} \label{eq:poly}
\Big|\int_M v\,w\circ T^n\,d\mu\Big|\le Cn^{-\beta}\|v\|_{C^\kappa} \|w\|_\infty
\quad\text{for all $v\in C^\kappa_0(M)$, $w\in L^\infty$, $n\ge1$},
\end{align}
In particular, $\beta>1$ corresponds to {\em summable decay of correlations.}
(For the maps~\eqref{eq:LSV}, $\beta=\gamma^{-1}-1$, so $\beta > 1$ corresponds to $\gamma < \frac12$.)
Equivalently,
$\|P^nv\|_1\le Cn^{-\beta}\|v\|_{C^\kappa}$ and by interpolation
$\|P^nv\|_p\le C^{1/p}n^{-\beta/p}\|v\|_{C^\kappa}$ for all $p\ge1$.

For $\beta>1$, we have that $\|P^nv\|_1$ is summable for $v\in C^\kappa_0(M,\R^m)$,
and a standard calculation shows that
formulas~\eqref{eq:Sigma1} and~\eqref{eq:Sigma2} for $\Sigma$ hold.
Also, the series $\chi=\sum_{n=1}^\infty P^nv$ converges in $L^p$ for all $p<\beta$ and we obtain an
$L^p$ martingale-coboundary decomposition $v=m+\chi\circ T-\chi$.
For $\beta>2$, we have $m,\,\chi\in L^2$ and the WIP follows.
With extra work it can be shown that the WIP holds for all $\beta>1$.  We refer to~\cite{KipnisVaradhan86,Liverani96,MaxwellWoodroofe00,TyranKaminska05} for further details.  See also~\cite{KKM16,MN05}.  By~\cite[Rem.~2.11]{MN05}, the degenerate case $\det\Sigma=0$ is infinite codimension in the sense of Proposition~\ref{prop:deg}.

In the case where $\mu$ is mixing only up to a finite cycle, the WIP etc go through unchanged, except that formula~\eqref{eq:Sigma2} does not make sense.

\subsubsection{Hyperbolic diffeomorphisms} \label{sec:NUH}

A WIP for Axiom~A diffeomorphisms can be found in~\cite{DenkerPhilipp84}.
The WIP is also well-known  to hold for
systems modelled by Young towers with exponential tails~\cite{Young98} as well as those with summable decay of correlations (for an explicit and completely general argument, see~\cite{MV16}).  This is a very flexible setting that covers large classes of nonuniformly hyperbolic diffeomorphisms (with singularities).

The results for hyperbolic diffeomorphisms $T:M\to M$ are similar to those in
Subsection~\ref{sec:NUE}, subject to
two complications.
The first complication affects the proofs.
Since $T$ is invertible,
the transfer operator~$P$ is an isometry on $L^q$ for all $q$.
In particular, $\ker P=\{0\}$.  Hence the approach in Subsection~\ref{sec:NUE} cannot be applied directly.
The method for getting around this is rather convoluted and is described at the end of this subsection.

The second complication affects the statement of the results.
Typically, the invariant measures of interest are supported on zero volume sets and hence there are no acips.
We say that $\mu$ is a {\em physical measure} if
the basin of attraction $B_\mu$ has positive volume.
(This is automatic for acips but is an extra assumption now.)  Let $\Vol$ denote the normalized volume on $B_\mu$.

There is an important class of physical measures $\mu$, known as Sinai-Ruelle-Bowen (SRB) measures~\cite{Young02}, for which the WIP with respect to $\Vol$ (and hence, by strong distributional convergence, every absolutely continuous probability measure $\lambda$ on $B_\mu$) follows from the WIP with respect to~$\mu$.
Hence it is natural to consider observables $v$ with $\int_\Lambda v\,d\mu=0$ and to ask that $W_n\to_\lambda W$ for absolutely continuous probability measures $\lambda$ on $B_\mu$.
\smallskip

{\it Axiom~A diffeomorphisms.}
Let $M$ be a compact Riemannian manifold.
A $C^2$ diffeomorphism $T:M\to M$ is said to be {\em Anosov}~\cite{Anosov67}
if there is a continuous $DT$-invariant splitting
$TM=E^s\oplus E^u$ (into stable and unstable directions) where $\|DT^n|E^s\|\le Ca^n$
and $\|DT^{-n}|E^u\|\le Ca^n$ for $n\ge1$.
Here $C>0$ and $a\in(0,1)$ are constants.

Smale~\cite{Smale67} introduced the notion of {\em Axiom~A} diffeomorphism extending the definition in~\cite{Anosov67}.
Since we are interested in SRB measures, we restrict attention to attracting sets, bypassing the full definitions in~\cite{Smale67}.
Recall that a closed $T$-invariant set $\Lambda\subset M$
is {\em attracting} if there is a neighbourhood $U$ of $\Lambda$ such that $\lim_{n\to\infty}\dist(T^ny,\Lambda)=0$ for all $y\in U$.
An attracting set is called {\em Axiom~A} if
there is a continuous $DT$-invariant splitting
$T_\Lambda M=E^s\oplus E^u$ over $\Lambda$, again with the properties
$\|DT^n|E^s\|\le Ca^n$
and $\|DT^{-n}|E^u\|\le Ca^n$ for $n\ge1$.
To avoid trivialities, we suppose that $\dim E^u_y\ge1$ for all $y\in\Lambda$.
(We allow $\dim E^s_y=0$ though this is just the uniformly expanding case.)

By~\cite{Smale67}, there is a spectral decomposition of $\Lambda$ into finitely many attracting sets, called {\em Axiom~A attractors} with the property that none of them can be decomposed further.
Moreover, periodic points are dense in $\Lambda$.

If $\Lambda$ is an Axiom~A attractor, then by~\cite{Bowen75,Ruelle78,Sinai72} there is a unique ergodic invariant probability measure $\mu$ on $\Lambda$ such that $\Leb(B_\mu) > 0$.
Moreover, $\mu$ is mixing up to a finite cycle.

All the results described in Subsection~\ref{sec:NUE} for uniformly expanding maps hold for Axiom~A attractors.
Specifically, let $C^\kappa_0(\Lambda,\R^m)$ denote the space of
$C^\kappa$ observables $v:\Lambda\to\R^m$ with $\int_\Lambda v\,d\mu=0$.
Then the WIP holds on $(\Lambda,\mu)$ and $(B_\mu,\lambda)$ with $\Sigma$ satisfying formula~\eqref{eq:Sigma1}.  Moreover $C^\kappa_{\rm deg}=\{v\in C^\kappa_0(\Lambda, \R^m):\det\Sigma=0\}$ is a closed subspace of infinite codimension in $C^\kappa_0(\Lambda, \R^m)$.
If in addition $\mu$ is mixing, then formula~\eqref{eq:Sigma2} holds.
\smallskip

{\it Nonuniformly hyperbolic diffeomorphisms and Young towers.}
A large class of attractors $\Lambda$ for nonuniformly hyperbolic diffeomorphisms (with singularities) $T:M\to M$ can be modelled by two-sided Young towers with exponential tails~\cite{Young98} and subexponential tails~\cite{Young99}.
The Young tower set up covers numerous classes of examples as surveyed
in~\cite{ChernovYoung00,Young98,Young02,Young17} including Axiom~A attractors, Lorentz gases,
H\'enon-like attractors~\cite{BenedicksYoung00},
and intermittent solenoids~\cite{MV16}.  See also~\cite{AFLV11,AlvesLuzzattoPinheiro05,AlvesPinheiro10}.

We end this subsection with a very rough sketch of the method of proof of the WIP for Young towers.   This includes the Axiom~A attractors as a special case for which standard references are~\cite{Bowen75,ParryPollicott90}.
The idea is again to induce to a map $F=T^\tau:Y\to Y$ that is a uniformly hyperbolic transformation with countable partition and full branches, as described in Young~\cite{Young98}, with an integrable inducing time $\tau:Y\to\Z^+$ that is constant on partition elements.  (Again $\tau$ is not necessarily the first return time.)  The construction in~\cite{Young98} ensures that there exists an SRB measure $\mu_Y$ for $F$.
Starting from $F$ and $\tau$, we construct a ``two-sided'' Young tower
$f:\Delta\to\Delta$ as in~\eqref{eq:Delta}
with ergodic invariant probability
$\mu_\Delta=\mu_Y\times\{{\rm counting}\}/\int_Y \tau\,d\mu_Y$.
The projection
$\pi_\Delta:\Delta\to \Lambda$, $\pi_\Delta(y,\ell)=T^\ell y$, defines a semiconjugacy
from $f$ to $T$, and
$\mu=(\pi_\Delta)_*\mu_\Delta$ is the desired SRB measure for $T:M\to M$.
Moreover, $\mu$ is mixing up to a finite cycle.

Given $v\in C^\kappa_0(M,\R^m)$, we define the lifted observable $\hat v=v\circ\pi_\Delta:\Delta\to\R^m$.  It suffices to work from now on with $\hat v$.

Next, there is a quotienting procedure which projects out the stable directions reducing to an expanding map.  Formally, this consists of a ``uniformly expanding'' map
$\bar F:\bar Y\to\bar Y$ and a projection $\pi:Y\to\bar Y$ such that
$\bar F\circ\pi=\pi\circ F$ and such that
$\tau(y)=\tau(y')$ whenever $\pi y=\pi y'$.   In particular, $\tau$ projects to a well-defined return time $\tau:\bar Y\to\Z^+$.
Using $\bar F$ and $\tau$ we construct
a ``one-sided'' Young tower $\bar f:\bar\Delta\to\bar\Delta$.
The projection $\pi$ extends to $\pi:\Delta\to\bar\Delta$ with
$\pi(y,\ell)=(\pi y,\ell)$ and we define $\bar\mu_\Delta=\pi_*\mu_\Delta$.
The map $\bar f$ plays the role of a ``nonuniformly expanding map''.

As in Subsection~\ref{sec:NUE}, we consider  the tails $\mu_Y(\tau>n)$.
In the exponential tail setting of~\cite{Young98}, $\mu_Y(\tau>n)=O(\gamma^n)$ for some $\gamma\in(0,1)$ and a version of the ``Sinai trick'' (see for example~\cite[Lem.~3.2]{MN05}) shows that
$v\circ\pi_\Delta=\hat v+\chi_1\circ f-\chi_1$ where
$\chi_1\in L^\infty$ and $\hat v(y)=\hat v(y')$ whenever $\pi y=\pi y'$.
In particular, $\hat v$ projects to a well-defined observable $\bar v:\bar\Delta\to\R^m$.

This construction can be carried out so that
$\bar v$ is sufficiently regular that
the analogue of
condition~\eqref{eq:P} holds, where $P$ is the transfer operator on $\bar\Delta$.
Hence we obtain an $L^\infty$ martingale-coboundary decomposition
$\bar v=m+\chi_2\circ \bar f-\chi_2$ on $\bar\Delta$.
This gives the associated decomposition
\[
v\circ\pi_\Delta=\hat v = m\circ\pi+\chi\circ f-\chi,
\]
on $\Delta$ where $\chi=\chi_1+\chi_2\circ\pi$.

Now the argument is finished, since we can apply the methods from
Subsection~\ref{sec:NUE} to obtain the WIP, etc, for $m$ on $(\bar\Delta,\bar\mu_\Delta)$, and hence $m\circ\pi$ on $(\Delta,\mu_\Delta)$, $\hat v$
on $(\Delta,\mu_\Delta)$, and $v$ on $(\Lambda,\mu)$.

Finally, we consider the case
$\mu(\tau>n)=O(n^{-(\beta+1)})$ with $\beta>1$.
In certain situations (nonuniform expansion but uniform contraction)
the Sinai trick works as above and reduces to the situation in~\eqref{eq:poly}.
The general case is more complicated but is covered by~\cite[Cor.~2.2]{MV16}.
Again, this is optimal since there are many examples with $\beta=1$ where the CLT with standard scaling does not hold.

\subsubsection{Hyperbolic flows} \label{sec:flow}

Let $\dot y=g(y)$ be an ODE defined by a $C^2$ vector field $g:M\to TM$ on a compact Riemannian manifold $M$.  Let $g_t:M\to M$ denote the corresponding flow.
Let $X \subset M $ be a codimension one cross-section transverse to the flow and let $\varphi:X\to\R^+$ be a return time function, namely a function such that $g_{\varphi(x)}(x)\in X$ for $x\in X$.  The map $T=g_\varphi:X\to X$ is called the {\em Poincar\'e map}.
We assume (possibly after shrinking $X$) that $\inf\varphi>0$.
Given an ergodic invariant probability measure $\mu_X$ on $X$ and $\varphi\in L^1(X)$, we construct an ergodic invariant probability measure $\mu$ on $M$ as follows.
Define the suspension
\[
X^\varphi=\{(x,u)\in X\times\R: 0\le u\le\varphi\}/\sim, \qquad
(x,\varphi(x))\sim (Tx,0).
\]
The suspension flow $T_t:X^\varphi\to X^\varphi$ is given by
$T_t(x,u)=(x,u+t)$ modulo identifications.
The probability measure $\mu^\varphi=(\mu_X\times{\rm Leb~})/\int_X\varphi\,d\mu_X$ is ergodic and $T_t$-invariant.
Moreover, $\pi:X^\varphi\to M$ given by $\pi(x,u)=T_ux$ is a semiconjugacy from
$T_t$ to $g_t$ and $\mu=\pi_*\mu^\varphi$ is the desired ergodic invariant probability measure on $M$.

Now suppose that $v\in C^\kappa_0(M,\R^m)$ and define the induced observable
\[
V:X\to\R^m, \qquad V(x)=\int_0^{\varphi(x)}v(g_u x)\,du.
\]
By a purely probabilistic argument~\cite{GM16} (based on~\cite{Ratner73,MT04,Gouezel07,MZ15}), the WIP for $V:X\to\R^m$ with the map $T$ implies a WIP for $v:M\to\R^m$.
That is, setting
$v_t=\int_0^t v\circ g_s\,ds$ and $W_n(t)=n^{-1/2}v_{nt}$, we obtain
$W_n\to_\mu W$ where $W$ is Brownian motion
 with
covariance $\Sigma=\Sigma_X/\int_X\varphi\,d\mu_X$ and
$\Sigma_X$ is the covariance in the WIP for $V$.

By Bowen~\cite{Bowen75}, Axiom~A flows can be realized as suspension flows over uniformly hyperbolic diffeomorphisms, and the above considerations yield the WIP for attractors for Axiom~A flows~\cite{DenkerPhilipp84}.  The same is true for large classes of nonuniformly hyperbolic flows modelled as suspensions over Young towers with summable decay of correlations, including Lorentz gases,
Lorenz attractors~\cite{AMV15} and singular hyperbolic attractors~\cite{AraujoMsub}.
In these situations, $\mu_X$ and $\mu$ are SRB measures on $X$ and $M$ respectively.  Also the nondegeneracy property in Proposition~\ref{prop:deg} applies to $\Sigma_X$ and thereby
$\Sigma=\Sigma_X/\int_X\varphi\,d\mu_X$.  Moreover,
\[
\Sigma=\lim_{t\to\infty}t^{-1}\int_M v_t\otimes v_t\,d\mu,
\]
and under extra (rather restrictive) mixing assumptions
\[
\Sigma=\int_0^\infty \int_M \{v\otimes (v\circ g_t)+(v\circ g_t)\otimes v\}\,d\mu\,dt.
\]

\section{General rough path theory}
\label{sec:RPs}

\subsection{Limit theorems from rough path analysis}\label{sec:RPs1}

Consider a (for simplicity only: finite-dimensional) Banach space $(\MB,\|\cdot \|)$ and fixed $p \in [2,3)$.
Define the group $G := \MB \oplus (\MB\otimes \MB )$
with multiplication $(a,M) \star (b,N) := (a + b, M + a \otimes b + N)$, inverse $(a,M)^{-1}:= (-a, -M + a\otimes a),$ and identity $(0,0)$.
A (level-$2$) {\it $p$-rough path} (over $\MB$, on $[0,1]$) is a path $\BX=(\BX_{t}:0 \le t \le 1)$ with values and increments $\BX_{s,t}:=\BX_s^{-1}\star \BX_t := (X_{s,t}, \X_{s,t}) \in G$ of finite $p$-variation condition, $p \in [2,3)$, either in the sense (``{\it homogeneous rough path norm}'')
\begin{equation} \label{equ:tripleenorm}
                   \vvvert \BX \vvvert_{p\var} := \| X \|_{p\var} +   \| \X \|^{1/2}_{(p/2)\var} < \infty \ ,
\end{equation}
or, equivalently, in terms of the {\it inhomogeneous rough path norm}
\begin{equation} \label{equ:doublenorm}
                   \| \BX \|_{p\var} := \| X \|_{p\var} +   \| \X \|_{(p/2)\var} < \infty \ ;
\end{equation}
we used the notation, applicable to any $\Xi$ from $\{ 0 \le s \le t \le 1 \}$ into a normed space, any $q>0$,
\begin{equation}  \label{equ:doubleRPnorm}
      \| \Xi \|_{q\var} := \left( \sup_{\op} \sum_{[s,t]\in \op} | \Xi_{s,t}|^{q}   \right)^{\frac 1 q} < \infty \ .
\end{equation}
Write $\CC^{p\var} ([0,1],\MB)$ resp.\ $\DD^{p\var} ([0,1],\MB)$ for the space of such (continuous resp.\ c{\`a}dl{\`a}g) $p$-rough paths;
and also $\CC$ resp.\ $\DD$ for the space of continuous resp.\ c{\`a}dl{\`a}g paths with values in $\MB \oplus (\MB\otimes \MB )$.
The space of $\alpha$-H{\"o}lder rough paths, $\CC^{\alpha\Hol}$ with $\alpha = 1/p \in (1/3,1/2]$ forms a popular subclass of $\CC^{p\var}$.
A weakly geometric $p$-rough path, in symbols $\BX \in \CC_g^{p\var}$, satisfies a ``product rule'' of the type $ \Sym (\X_t ) = (1/2) X_t \otimes X_t$; effectively $\BX$ takes values in a sub-group $H \subset G$.
(We remark that, when $\MB = \R^m$, the (Lie) groups $H,G,$ can be identified with, respectively, the step-$2$ truncated free nilpotent group with $m$ generators and the step-$2$ truncated Butcher group with $m$ decorations of its nodes.)
Every continuous BV path lifts canonically via $$\X_t = \int_0^t (X_s - X_0) \otimes dX_s \ ,$$ and gives rise to a (continuous) weakly geometric $p$-rough path.
Conversely, every $\BX \in \CC_g^{p\var}$ is the uniform limit of smooth paths, with uniform $p$-variation bounds.
Similarly, every c{\`a}dl{\`a}g BV path $X$ lifts canonically via $$\X_t = \int_{(0,t]} (X^-_s - X_0) \otimes dX_s \,$$ to a (c{\`a}dl{\`a}g) $p$-rough path in $\DD^{p\var}$. We introduce, on $\DD^{p\var}$ (and then by restriction on $\CC^{p\var}$ and $\CC_g^{p\var}$) the
 {\it (inhomogeneous) $p$-rough path distance}\footnote{In view of the genuine non-linearity of rough path spaces, we refrain from writing $\| \BX - \tilde \BX  \|_{p\var,[0,1]}$.}
\begin{equation} \label{equ:doublenormdist}
                   \| \BX ; \tilde \BX  \|_{p\var} := \| X - \tilde X \|_{p\var} +   \| \X -\tilde \X \|_{(p/2)\var}  \ .
\end{equation}
(A similar H\"older rough path distance can be defined on  $\CC^{\alpha\Hol}$ and $\CC_g^{\alpha\Hol}$).

\medskip
Consider sufficiently regular vector fields $V_0: \R^d \to \R^d$ and $V: \R^d \to L(\MB,\R^d)$.
By definition, $Y$ solves the {\it rough differential equation (RDE)}
$$
                dY = V_0(Y^-)dt + V(Y^-) d \BX
$$
if, for all $0\le s < t \le 1$, writing $DV$ for the derivative,\footnote{In coordinates, when $\MB = \R^m$, we have $DV (Y_s) V (Y_s) \X_{s,t} =  \partial_\alpha V_\gamma (Y_s) V^\alpha_\beta(Y_s) \X_{s,t}^{\beta,\gamma}$
with summation over $\alpha = 1, \ldots, d$ and $\beta, \gamma = 1, \ldots , m.$}
\begin{equation}                 \label{def:davieRDE}
               Y_t - Y_s = V_0(Y_s) (t-s) + V (Y_s) X_{s,t} + DV (Y_s) V (Y_s) \X_{s,t} + R_{s,t} \ ,
\end{equation}
where, for the ``remainder term'' $R$, we require,  writing $\op (\eps)$ for a partition of $[0,1]$ with mesh-size less than $\eps$,
$$ \sup_{\op (\eps)} \sum_{[s,t]\in \op (\eps)} | R_{s,t}|   \to 0  \ \ \text{ as $\eps \to 0$.} $$
This definition first encodes that $Y$ is controlled (cf. \cite{FS17}) by $X \in D^{p\var}$, with derivative $Y'=V(Y_s) \in D^{p\var}$ and remainder $Y_{s,t}^\# = W (Y_s) \X_{s,t} + R_{s,t}$ with $\| Y^\# \|_{(p/2)\var} < \infty$.
As a consequence, $Y$ satisfies a bona fide rough integral equation, for all $t \in (0,1]$,
$$
            Y_t = y_0 + \int_{(0,t]} V_0(Y^-_s) ds + \int_{(0,t]} V(Y^-_s) d \BX_s \ .
$$
Conversely,~\eqref{def:davieRDE} is satisfied by every solution to this integral equation. See e.g.~\cite{FH14} for more details on this construction in the H\"older rough path case, and~\cite{FS17, FZ17} for the c{\`a}dl{\`a}g $p$-variation case; this contains the discrete H{\"o}lder setting of~\cite{Kelly16}.
The following theorem, in the case of continuous $p$-rough paths, is due to Lyons \cite{Lyo98}, the recent extension to c{\`a}dl{\`a}g rough paths is taken from \cite{FZ17}. We write $C^{p+}$ to indicate $C^{p+\eps}$, for some $\eps>0$.

\begin{thm}[Continuity of RDE solution map]  \label{thm:ult}
Let $ p \in [2,3)$.
Consider a c{\`a}dl{\`a}g rough path $ \BX \in \DD^{p\var} ([0,1],\cB)$, and assume $V_0 \in C^{1+}$ and $V \in C^{p+}$.
Then there exists a unique c\`adl\`ag solution $Y \in D([0,1],\R^d)$ to the rough differential equation
\[
dY_t = V_0(Y^-_t)dt + V(Y^-_t) d \BX_t,  \ \ Y_0 = y_0  \in \R^d\;,
\]
and the solution is
locally Lipschitz in the sense that
$$ \|Y-\tY\|_{p\var} \lesssim  \|\BX ;\tBX \|_{p\var} 
+ |y_0-\ty_0| $$
with proportionality constant uniform over bounded classes of driving $p$-rough paths.
\end{thm}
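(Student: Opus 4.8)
The plan is to follow the classical Picard-iteration/fixed-point strategy of Lyons, but carried out in the c\`adl\`ag $p$-variation setting, exploiting the fact that the RDE solution, by definition, is a path controlled by $\BX$. First I would set up the relevant Banach space: for a fixed interval $[s,t]\subset[0,1]$, consider pairs $(Y,Y')$ with $Y\in D([s,t],\R^d)$, $Y'=V(Y)$ playing the role of the Gubinelli derivative, and remainder $Y^\#_{u,v} := Y_{u,v} - Y'_u X_{u,v}$ of finite $(p/2)$-variation; the natural norm is $|Y_s| + |Y'_s| + \|Y'\|_{p\var} + \|Y^\#\|_{(p/2)\var}$. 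On this space one defines the map $\mathcal{M}(Y,Y') := \bigl(y_0 + \int_{(s,\cdot]} V_0(Y^-)\,du + \int_{(s,\cdot]} V(Y^-)\,d\BX,\ V(Y)\bigr)$, where the rough integral is the one whose existence (via a sewing-type lemma for c\`adl\`ag controlled paths) is recorded in \cite{FS17,FZ17}; the key analytic input is that for $V\in C^{p+}$ the composition $u\mapsto V(Y_u)$ is again a controlled path and that the rough integral against $\BX\in\DD^{p\var}$ obeys the estimate $\|\int V(Y^-)\,d\BX\|_{p\var,[s,t]} \lesssim \|V(Y)\|_\infty\|X\|_{p\var,[s,t]} + (\text{controlled-path data})\cdot\|\X\|_{(p/2)\var,[s,t]}$, with the remainder of $(p/2)$-variation.

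The second step is the contraction argument on a short interval. By choosing the interval length so that $\|X\|_{p\var,[s,t]} + \|\X\|^{1/2}_{(p/2)\var,[s,t]}$ is small — possible because a finite $p$-variation function has vanishing $p$-variation modulus on small intervals, \emph{but only in the continuous case}; in the c\`adl\`ag case one instead uses that the control function $\omega(s,t) := \|X\|^p_{p\var,[s,t]} + \|\X\|^{p/2}_{(p/2)\var,[s,t]}$ is superadditive and right-continuous, so one can still make $\omega$ small on a sufficiently fine greedy partition, with each jump absorbed into a single partition point — one shows $\mathcal{M}$ maps a suitable ball into itself and is a strict contraction there, yielding a unique controlled-path solution on $[s,t]$. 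Uniqueness of the \emph{Davie-type} solution \eqref{def:davieRDE} is then recovered by the standard argument that any solution in the sense of \eqref{def:davieRDE} is automatically a controlled path with the correct derivative and remainder, hence coincides with the fixed point. Patching over the finitely many intervals of the greedy partition — here the jumps of $\BX$ cause no difficulty because $Y_t - Y_{t^-} = V(Y_{t^-})X_{t^-,t} + DV(Y_{t^-})V(Y_{t^-})\X_{t^-,t}$ is an explicit local update — gives existence and uniqueness of the global c\`adl\`ag solution.

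For the local Lipschitz estimate, the idea is to run the same fixed-point machinery simultaneously for two data $(\BX,y_0)$ and $(\tilde\BX,\tilde y_0)$ and estimate the difference of the solutions. On a short interval one writes the difference $(Y - \tilde Y, Y' - \tilde Y')$ as the image under the (now linear-in-the-difference, up to the nonlinearity of $V_0,V$) map and uses the contraction constant $<1$ together with the stability estimate for the rough integral — of the schematic form $\|\int V(Y^-)\,d\BX - \int V(\tilde Y^-)\,d\tilde\BX\|_{p\var} \lesssim \|\BX;\tilde\BX\|_{p\var} + \|Y-\tilde Y\|_{\text{controlled}}$, valid uniformly over bounded sets of rough paths because $V_0\in C^{1+}$, $V\in C^{p+}$ have bounded derivatives on the relevant compacts — to solve for $\|Y-\tilde Y\|_{p\var,[s,t]} \lesssim \|\BX;\tilde\BX\|_{p\var,[s,t]} + |y_0-\tilde y_0|$. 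One then patches: because the number of intervals in the greedy partition is controlled uniformly over bounded classes of driving rough paths, summing the local estimates (and propagating the endpoint mismatch $|Y_{t_i} - \tilde Y_{t_i}|$ through each interval) gives the global bound with a constant depending only on the bound for $\vvvert\BX\vvvert_{p\var}, \vvvert\tilde\BX\vvvert_{p\var}$ and on $\|V_0\|_{C^{1+}}, \|V\|_{C^{p+}}$.

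The main obstacle, and the place where the c\`adl\`ag setting genuinely departs from the classical continuous theory, is the first step: establishing the rough integral and its a priori and stability estimates for \emph{c\`adl\`ag} controlled paths. The usual sewing lemma assumes continuity, and in the c\`adl\`ag case one must use the version that produces a c\`adl\`ag (not continuous) indefinite integral, controlling the remainder's $(p/2)$-variation rather than its H\"older modulus, and one loses the ``arbitrarily small $p$-variation on small intervals'' property, replacing it by a greedy-partition argument based on the control $\omega$. All of this is precisely the content developed in \cite{FS17,FZ17}, so here it can be invoked; the remaining work — the contraction, the patching, the identification of Davie solutions with controlled-path solutions, and the uniform bookkeeping for the Lipschitz estimate — is then routine, following the continuous template in \cite{FH14} mutatis mutandis.
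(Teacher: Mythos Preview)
The paper does not actually prove this theorem: it is stated as a black box, with the continuous case attributed to Lyons~\cite{Lyo98} and the c\`adl\`ag extension to~\cite{FZ17}. Your outline is a faithful sketch of the argument carried out in~\cite{FZ17} (c\`adl\`ag sewing, controlled-path formulation, contraction on intervals determined by a greedy partition of the control $\omega$, explicit jump updates, and stability via the difference of the fixed-point maps), so there is no discrepancy to discuss---you have simply supplied more detail than the paper itself, which is content to invoke the literature.
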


The $p$-variation rough path distance can be replaced by a $p$-variation Skorokhod type rough path metric, which adds more flexibility when the limiting (rough) path has jumps, but we won't need this generality here. 
Checking $p$-variation rough path convergence can be done by interpolation: uniform convergence plus uniform $p'$-variation bounds, for some $p' < p$.

It is known that (c{\`a}dl{\`a}g) semimartingales give rise to
c{\`a}dl{\`a}g $p$-rough paths for any $p \in (2,3)$~\cite{CF17x}.
Solving the resulting random RDE provides exactly a (robust) solution theory for the corresponding SDE.
As a consequence, we have the following limit theorems of Stratonovich and It\^o type, which cannot be obtained by UCV/UT type argument familiar from stochastic analysis.
(Assumptions on $V_0, V$ are as above.)
The following theorem applies in particular to sequences of smooth processes, in which case Stratonovich SDEs are simply random ODEs.

\begin{thm}[Stratonovich-type limit theorem]\label{thm:strat ult}
Consider a sequence of continuous semimartingale drivers $(B^n)$ with Stratonovich lift ($\BB^{\circ,n}$), such that $\BB^{\circ,n}$ converges to $\BB = (B, \BBB^\circ + \Gamma)$, for some continuous BV process $\Gamma$, weakly (resp.\ in probability, a.s.) in the uniform topology with $\{\|\BB^{\circ, n}\|_{p\var}(\omega)\}$ tight, for $p \in (2,3)$.
($\Gamma$ is necessarily skew-symmetric.)
\begin{enumerate}[label=(\roman*)]
\item \label{point:strat 1} For any $p'>p$, it holds that $\BB^{\circ,n} \to \BB$ weakly (resp.\ in probability, a.s.) in the $p'$-variation rough path topology.
\item \label{point:strat 2} Assume, in the sense of Stratonovich SDEs,
\footnote{Often $B^n$ has continuous BV sample paths. Every such process is (trivially) a semimartingale (under its own filtration); the Stratonovich SDE interpretation is
the one consistent with the ODE interpretation, in the sense of a Riemann-Stieltjes integral equation.}
\begin{equation*} 
dY^n_t = V_0(Y^{n}_t)dt + V(Y^{n}_t) \circ dB^n_t
\end{equation*}
such that $Y^n_0 \equiv y^n_0 \to y_0$. Then the Stratonovich SDE solutions $Y^n$ converge weakly (resp.\ in probability, a.s.) to $Y$ in the uniform topology, where $Y_0=y_0$ and
\begin{equation*}
dY_t = V_0(Y_t)dt + DV (Y_t) V (Y_t) d \Gamma_t + V(Y_t) \circ dB_t\;.
\end{equation*}
Moreover, $\{ \left\Vert Y^{n}\right\Vert _{p\var}\left( \omega \right) :n\geq 1 \} $ is tight and one also has weak (resp.\ in probability, a.s.) convergence in the $p^{\prime }$-variation uniform metric for any $p^{\prime }>p.$
\end{enumerate}
\end{thm}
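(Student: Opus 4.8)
The plan is to reduce the statement to the deterministic continuity theorem (Theorem~\ref{thm:ult}), having first upgraded uniform convergence of the drivers to $p'$-variation rough path convergence by interpolation, and then to identify the limiting RDE solution with the claimed SDE.

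\emph{Part (i).} First I would verify that $\BB=(B,\BBB^\circ+\Gamma)$ is a bona fide weakly geometric continuous $p$-rough path: $\BBB^\circ$ is the Stratonovich second level of the continuous semimartingale $B$, hence $(B,\BBB^\circ)$ is a $p$-rough path for $p\in(2,3)$, while $\Gamma$ is continuous BV so that $\|\Gamma\|_{(p/2)\var}\le\|\Gamma\|_{1\var}<\infty$. Skew-symmetry of $\Gamma$ is automatic: passing to the uniform limit in the product rule $\Sym(\BBB^{\circ,n}_t)=\tfrac12 B^n_t\otimes B^n_t$ and subtracting $\Sym(\BBB^\circ_t)=\tfrac12 B_t\otimes B_t$ gives $\Sym(\Gamma_t)=0$. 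The convergence $\BB^{\circ,n}\to\BB$ in $p'$-variation rough path topology is then pure interpolation: a family of rough paths that converges uniformly and has uniformly bounded $p$-variation converges, for every $p'>p$, in the stronger $p'$-variation rough path distance and stays bounded in $p'$-variation. In the ``in probability'' and ``a.s.'' regimes this is applied pathwise; in the weak regime I would run a Skorokhod representation on the (separable) uniform topology, jointly for $\BB^{\circ,n}$ and the tight scalars $\|\BB^{\circ,n}\|_{p\var}$, so that $\sup_n\|\BB^{\circ,n}\|_{p\var}<\infty$ almost surely on the new space, apply the interpolation pathwise, and transfer back.

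\emph{Part (ii).} Next I would invoke that the Stratonovich SDE solution $Y^n$ coincides almost surely with the RDE solution driven by the Stratonovich lift $\BB^{\circ,n}$ --- the content of the robust (rough path) solution theory for SDEs, cf.~\cite{CF17x,FZ17}. Working on the Skorokhod space of Part (i), where $\BB^{\circ,n}\to\BB$ in $p'$-variation with $\sup_n\|\BB^{\circ,n}\|_{p'\var}<\infty$ a.s., Theorem~\ref{thm:ult} applied pathwise yields $Y^n\to Y$ in the $p'$-variation uniform metric (almost surely, hence weakly, resp.\ in probability), where $Y$ is the RDE solution driven by $\BB$; comparing $\BB^{\circ,n}$ against a fixed reference driver in the quantitative bound of Theorem~\ref{thm:ult} also gives $\|Y^n\|_{p\var}\lesssim\|\BB^{\circ,n}\|_{p\var}+|y^n_0|+1$, whence tightness of $\{\|Y^n\|_{p\var}\}$ from that of $\{\|\BB^{\circ,n}\|_{p\var}\}$. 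It remains to identify $Y$. In the Davie expansion \eqref{def:davieRDE} for $\BB=(B,\BBB^\circ+\Gamma)$, the second-level term splits as $DV(Y_s)V(Y_s)\BBB^\circ_{s,t}+DV(Y_s)V(Y_s)\Gamma_{s,t}$, and since $\Gamma$ is continuous BV the latter equals $\int_s^t DV(Y_r)V(Y_r)\,d\Gamma_r$ up to a quantity absorbed into the RDE remainder. Thus $Y$ solves the RDE driven by the pure Stratonovich lift $\BBB^\circ$ together with the additional drift producing $DV(Y)V(Y)\,d\Gamma$, and the RDE driven by $\BBB^\circ$ is precisely the Stratonovich SDE $dY=V_0(Y)dt+V(Y)\circ dB$. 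Therefore $dY_t=V_0(Y_t)dt+DV(Y_t)V(Y_t)\,d\Gamma_t+V(Y_t)\circ dB_t$, as claimed.

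\emph{Main obstacle.} The interpolation and the Skorokhod transfer are routine; the conceptual heart is the identification of the limit, i.e.\ that a continuous BV (skew-symmetric) perturbation of the \emph{second level} of the driving rough path manifests exactly as the additional drift $DV(Y)V(Y)\,d\Gamma$. This is a ``translation of rough paths'' computation, and the only genuine work is checking that inserting $\Gamma_{s,t}$ into the Davie scheme and re-expanding along partitions leaves a legitimate RDE remainder (mesh-wise $1$-variation tending to $0$). A secondary subtlety is that the $p'$-variation rough path space is not separable, so the Skorokhod representation must be run on the uniform topology augmented with the tight scalar $\|\BB^{\circ,n}\|_{p\var}$, with the interpolation done afterwards pathwise.
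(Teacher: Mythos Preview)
Your proposal is correct and follows essentially the same route the paper takes: the paper states explicitly that the proof ``is essentially a corollary of interpolation, in a weak convergence setting, with the purely deterministic Theorem~\ref{thm:ult}, see~\cite{FZ17} for details,'' together with the SDE/RDE consistency from~\cite{CF17x}. You have simply fleshed out those details --- the interpolation upgrade from uniform-plus-tight-$p$-variation to $p'$-variation convergence, the Skorokhod representation to reduce the weak case to the a.s.\ case, and the translation-of-rough-paths identification of the $\Gamma$-perturbation as the $DV(Y)V(Y)\,d\Gamma$ drift --- none of which the paper spells out beyond the one-line sketch.
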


We now state an analogous It{\^o}-type result.
The next theorem in particular applies to sequences of piecewise constant, c{\`a}dl{\`a}g processes, in which case, It{\^o} SDEs are simply stochastic recursions.

\begin{thm}[It{\^o}-type limit theorem]\label{thm:ito ult}
Consider a sequence of c{\`a}dl{\`a}g semimartingale drivers $B^n$, with It{\^o} lift $\BB^n = (B^n, \BBB^n)$, such that $\BB^n$ converges to $\BB = (B, \BBB + \Gamma)$, for some c{\`a}dl{\`a}g BV process $\Gamma$, weakly (resp.\ in probability, a.s.) in the  uniform topology with $\{\|\BB^n\|_{p\var}(\omega)\}$ tight, for $p \in (2,3)$.
\begin{enumerate}[label=(\roman*)]
\item For any $p'>p$, it holds that $\BB^{\circ,n} \to \BB$ weakly (resp.\ in probability, a.s.) in the $p'$-variation rough path topology.
\item Assume, in the sense of It{\^o} SDEs,
\begin{equation*}
dY^n_t = V_0(Y^{n,-}_t)dt + V(Y^{n,-}_t) dB^n_t
\end{equation*}
such that $Y^n_0 \equiv y^n_0 \to y$. Then It{\^o} SDE solutions $Y^n$ converge weakly (resp.\ in probability, a.s.) to $Y$ in the uniform topology, where $Y(0)=y$ and
\begin{equation*}
dY_t = V_0(Y^-_t)dt + DV (Y^-_t) V (Y^-_t) d \Gamma_t + W(Y^-_t) dB_t \;.
\end{equation*}
Moreover, $\{ \left\Vert Y^{n}\right\Vert _{p\var}\left( \omega \right) :n\geq 1 \} $ is tight and one also has weak (resp.\ in probability, a.s.) convergence in the $p^{\prime }$-variation uniform metric for any $p^{\prime }>p.$
\end{enumerate}
\end{thm}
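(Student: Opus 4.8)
The plan is to recognise each $Y^n$ as the solution of the rough differential equation driven by the It\^o rough path lift $\BB^n$, to upgrade the assumed uniform convergence $\BB^n\to\BB$ to convergence in a $p'$-variation rough path topology for some $p'\in(p,3)$, to push this through the continuity of the RDE solution map (Theorem~\ref{thm:ult}), and finally to identify the limiting RDE as the claimed It\^o SDE carrying the additional drift $DV(Y^-)V(Y^-)\,d\Gamma$. For part (i) I would argue purely by interpolation. For $2<p<p'<3$ and c\`adl\`ag level-$2$ paths one has the elementary estimate
\[
\| \BB^n ; \BB \|_{p'\var} \lesssim \big( \| \BB^n \|_{p\var} \vee \| \BB \|_{p\var} \big)^{p/p'}\, \| \BB^n - \BB \|_\infty^{1-p/p'},
\]
applied separately on the two levels (with exponents $p,p'$ and then $p/2,p'/2$). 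In the almost sure case this gives $\| \BB^n;\BB \|_{p'\var}\to 0$ directly, using that $\| \BB^n \|_{p\var}$ is bounded along the realisation in question; the weak and in-probability cases follow from the same bound after passing to almost surely convergent representatives by the Skorokhod representation theorem while carrying along the tight family $\{\| \BB^n \|_{p\var}\}$. This is exactly the interpolation principle recorded after Theorem~\ref{thm:ult}. (The statement as written should read $\BB^n\to\BB$.)

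For part (ii) I would first invoke the facts recalled above: c\`adl\`ag semimartingales carry canonical $p$-rough path It\^o lifts for every $p\in(2,3)$~\cite{CF17x}, and the associated RDE solution theory is a robust solution theory for the It\^o SDE~\cite{FZ17}. Hence $Y^n$ is the c\`adl\`ag solution of $dY^n = V_0(Y^{n,-})\,dt + V(Y^{n,-})\,d\BB^n$, $Y^n_0=y^n_0$. Choosing $p'\in(p,3)$ close enough to $p$ that $V\in C^{p+}\subseteq C^{p'+}$ (and keeping $V_0\in C^{1+}$), Theorem~\ref{thm:ult} applies with exponent $p'$ and produces a unique c\`adl\`ag solution $\tY$ driven by $\BB$, $\tY_0=y$, depending locally Lipschitz-continuously on the driver in the $p'$-variation rough path distance. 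Combining this with part (i) --- and, in the weak case, with the Skorokhod representation and continuous mapping theorems --- gives $Y^n\to\tY$ weakly (resp.\ in probability, a.s.) in the $p'$-variation rough path topology, hence in the uniform topology. Applying the Lipschitz estimate of Theorem~\ref{thm:ult} against a constant reference rough path bounds $\| Y^n \|_{p\var}$ by a function of $\| \BB^n \|_{p\var}$, so tightness of $\{\| \BB^n \|_{p\var}\}$ transfers to tightness of $\{\| Y^n \|_{p\var}\}$, and interpolation again upgrades uniform convergence to convergence in the $p'$-variation uniform metric.

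It then remains to see that $\tY$ solves the stated equation. At the level of increments $(B_{s,t},\BBB_{s,t})\star(0,\Gamma_{s,t})=(B_{s,t},\BBB_{s,t}+\Gamma_{s,t})$, so $\BB=(B,\BBB+\Gamma)$ is the second-level translate of the It\^o lift of $B$ by the c\`adl\`ag BV path $\Gamma$. Substituting $\X_{s,t}\mapsto\X_{s,t}+\Gamma_{s,t}$ in the Davie expansion~\eqref{def:davieRDE} produces exactly the extra increment $DV(Y_s)V(Y_s)\,\Gamma_{s,t}$, which, since $\Gamma$ is of bounded variation, is a genuine Lebesgue--Stieltjes drift and leaves the remainder condition untouched. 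Hence $\tY$ solves, in the rough --- equivalently (again by~\cite{CF17x,FZ17}) the It\^o --- sense, $d\tY = V_0(\tY^-)\,dt + DV(\tY^-)V(\tY^-)\,d\Gamma + V(\tY^-)\,dB$, $\tY_0=y$, which is the asserted SDE for $Y$, and uniqueness of solutions gives $\tY=Y$.

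The genuinely non-formal step will be this last one: the c\`adl\`ag rough-path translation lemma, and the fact that the rough integral against the It\^o lift coincides with the It\^o stochastic integral. The continuous analogue is classical~\cite{FH14} (and the Stratonovich version is Theorem~\ref{thm:strat ult}); because the perturbation $\Gamma$ has bounded variation, no new rough-path cancellations can appear at jump times, so what is really needed is a careful check of the bookkeeping inside the c\`adl\`ag $p$-variation framework of~\cite{FZ17}. The rest --- the interpolation in parts (i) and (ii) and the weak-convergence arguments via Skorokhod representation and continuous mapping --- is routine once Theorem~\ref{thm:ult} is available.
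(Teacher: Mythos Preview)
Your proposal is correct and follows exactly the approach the paper indicates: the paper does not give a self-contained proof but states that ``the proof is essentially a corollary of interpolation, in a weak convergence setting, with the purely deterministic Theorem~\ref{thm:ult}, see~\cite{FZ17} for details,'' and that consistency of the RDE with the It\^o SDE comes from~\cite{CF17x}. You have spelled out precisely these ingredients---interpolation for part~(i), then RDE/SDE consistency, continuity of the solution map, Skorokhod representation in the weak case, and the second-level translation by $\Gamma$ for the identification of the limit---and you also caught the typo $\BB^{\circ,n}$ in part~(i).
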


\begin{rem}
A minor generalization of Theorem \ref{thm:ito ult}, which will be convenient later on, states that the drift term $V_0(Y^{n,-}_t)dt$ in the approximate problem can be replaced by $V_0(Y^{n,-}_t)d\tau^n$ where $\tau^n(t) \to t$ uniformly with uniform $1$-variation bounds.
\end{rem}

We emphasize that in both Theorem~\ref{thm:strat ult} and~\ref{thm:ito ult}, the {\it only purpose} of the semimartingale and adaptedness assumptions is to give a familiar interpretation of what is  really a rough differential equation driven by a random rough path. (Consistency with SDEs, in a general semimartingale setting, is established in~\cite{CF17x}).
The proof is essentially a corollary of interpolation, in a weak convergence setting, with the purely deterministic Theorem~\ref{thm:ult},
see~\cite{FZ17} for details.

\subsection{WIPs in rough path theory}

We start with some generalities. A {\it Brownian rough path} (over $\R^m$) is an $\R^m \oplus (\R^m \otimes \R^m)$-valued continuous process $\BB = (B, \BBB)$ with independent increments with respect to the group structure introduced in Section \ref{sec:RPs1}, such that $B$ is centered. (In particular, $B$ is a classical $m$-dimensional Brownian motion.) It is known that sample paths $\BB(\omega)$ are, with probability one, in $\CC^{\alpha\Hol}$ for any $\alpha <1/2$, and hence also in $\CC^{p\var}$ and $\DD^{p\var}$ for any $p>2$. We have a full characterization of Brownian rough paths: $B$ is a classical $m$-dimensional Brownian motion (with some covariance $\Sigma \in \R^m \otimes \R^m$) and
$$
      \BBB_{s,t} = \int_s^t B_{s,r} \otimes \circ dB_r + (t-s) \Gamma  =  \int_s^t B_{s,r} \otimes dB_r + (t-s) (\Gamma + \tfrac{1}{2} \Sigma)
     $$
for some matrix $\Gamma \in \R^m \otimes \R^m$, which we name {\it area drift}. (Note that $\BB$ is geometric iff $\Gamma $ is skew-symmetric.)
Given a sequence of random rough paths $(\BB_\eps)$, we say that the WIP holds {\it in $\alpha$-H\"older} (resp.\ {\it $p$-variation}) {\it rough path sense} if, as $\eps \to 0$,
\begin{equation}  \label{equ:WIPinCC}
          \BB_{\eps}  \to_{w} \BB  \text{ in $\CC^{\alpha\Hol}$ (resp.\ $\CC^{p\var}$, \ $\DD^{p\var}$)}
\end{equation}
but note that only the regimes $\alpha \in (1/3,1/2)$ (resp.\ $p \in (2,3)$) correspond to a WIP in a bona fide rough path topology.
As is implicit in Theorems \ref{thm:strat ult} and \ref{thm:ito ult}, this follows from checking convergence in law in the uniform topology; that is, an {\it enhanced weak invariance principle} in the sense that
\footnote{Again it suffices to work with the uniform topology on
 both $\CC$ and $\DD$.}
$$
          \BB_{\eps}  \to_{w} \BB  \text{ in $\CC$ (resp.\ $\DD$) as $\eps \to 0$}\;,
$$
together with tightness of $\alpha$-H\"older (resp.\ $p$-variation) rough path norms, at the expense of replacing $\alpha$ (resp.\ $p$) in
(\ref{equ:WIPinCC}) with $\alpha' < \alpha$ (resp.\ $p' > p$).

\medskip
In simple situations, $\BB_{\eps}$ is given as the canonical (Stratonovich or It{\^o}) lift
of a {\it good} sequence of convergent semimartingales. In this case, the limiting area drift is zero and $ \{ \vvvert \BB_{\eps} \vvvert_{p\var} : \eps \in (0,1]\} $ is automatically tight \cite{CF17x}, for any $p>2$.
(This gives a decisive link between classical semimartingale stability theory \cite{JMP89, KP91} with rough path analysis.) Immediate applications then include
Donsker's theorem in $p$-variation rough path topology, under identical (finite second) moment assumptions as the classical Donsker theorem. (With piecewise linear interpolation, the limit is the Stratonovich lift $(B, \int B \otimes \circ dB)$, with piecewise constant interpolation the limit is the It{\^o} lift $(B, \int B \otimes dB)$.)  As another immediate application,
the (functional) CLT for $L^2$-martingales with stationary, ergodic increments is valid on a rough path level \cite{CF17x}. If interested in the $\alpha$-H\"older rough path topology,
one can use a Kolmogorov-type tightness criterion \cite{FV10}. Provided $q>1$, a uniform moment estimate of the form
\begin{equation}   \label{equ:UME}
\sup_{\eps \in (0,1]}  \E\Big[ \vvvert \BB_\eps(s,t) \vvvert^{2q}\Big]^{1/2q}
\lesssim |t-s|^{1/2}\;,
\end{equation}
or equivalently,
\[
\sup_{\eps \in (0,1]}  \E\Big[ | B_\eps(s,t) |^{2q}\Big]^{1/2q}
\lesssim |t-s|^{1/2}\;, \ \ \  \sup_{\eps \in (0,1]}  \E\Big[ | \BBB_\eps(s,t) |^{q}\Big]^{1/q}
\lesssim |t-s| \ ,
\]
gives tightness in the $\alpha$-H\"older topology, for every $\alpha < 1/2 - 1/(2q)$.
\begin{rem}\label{rem:holder tight}
In the H{\"o}lder setting, note that only $\alpha > \frac{1}{3}$ gives a bona fide (level-$2$) rough path metric under which the It\^o map behaves continuously. This leads to the suboptimal moment assumption $q>3$.
To obtain the WIP in the H\"older rough path sense \cite{BFH09}, this necessitates increments with $6+$ moments.
(In contrast, we have the WIP in the $p$-variation rough path topology under the optimal assumption of $2+$ moments.)
This is also the main drawback of using the H{\"o}lder topology in~\cite{KM16}.

Of course, in Gaussian situations such moment assumptions are harmless and can conveniently be reduced to $q=1$. An instructive example is given by physical Brownian motion $X^\eps$, as introduced in Section \ref{sec:WIP}. The tightness condition can be seen to be satisfied for all $q< \infty$, giving $\alpha$-H\"older rough path tightness for any $\alpha < 1/2$. More interestingly, $X^\eps$ has a Brownian rough path limit with non-zero area drift \cite{FGL15}, provided the particle feels a Lorentz force, expressed through  non-symmetry of $M$. (See the notation in Section \ref{sec:WIP}.) Specifically, this is seen by writing $M X^\eps$ as Brownian motion plus a ``corrector'' which goes uniformly to zero, but leads, in the $\eps \to 0$ limit, to an area contribution.
\end{rem}

\begin{rem}
We note that (\ref{equ:UME}) requires no martingale assumptions whatsoever. It is an important observation for the sequel that (\ref{equ:UME}) leads to tightness not only  in the $\alpha$-H\"older rough path topology but also in the $p$-variation rough path topology: to wit, it follows from the Besov-variation embedding \cite{FV06} that (\ref{equ:UME}) implies $p$-variation tightness for any $p>2$. In this way, for example, one can reprove the WIP in $p$-variation rough path topology under the almost optimal assumption of $2+$ moments. This argument becomes important when direct martingale arguments are not possible.
\end{rem}

\section{Applications to fast-slow systems}
\label{sec:appl}

 \subsection{Chaotic dynamics: enhanced WIP and moments}
\label{sec:chaos2}

In this subsection, we resume the discussion of chaotic dynamical systems from Section~\ref{sec:chaos} but now focusing on some finer statistical properties, namely an enhanced WIP and moment estimates, that are required for applying rough path theory.

\subsubsection{Expanding maps} \label{sec:NUE2}

Continuing Subsection~\ref{sec:NUE}, we suppose that $T:M\to M$ is a $C^2$ uniformly expanding map, with unique ergodic absolutely continuous invariant probability measure $\mu$, so
conditions~\eqref{eq:decay} and~\eqref{eq:P} hold.
In particular, for any $v\in C^\kappa_0(M,\R^m)$, we have an $L^2$
martingale-coboundary decomposition decomposition $v=m+\chi\circ T-\chi$.
Define the c\`adl\`ag processes $W_n\in D([0,1],\R^m)$,
$\BBW_n\in D([0,1],\R^m \otimes \R^m)$,
\[
W_n(t)=n^{-1/2}\sum_{0\le j<n}v\circ T^j, \qquad
\BBW_n(t)=n^{-1}\sum_{0\le i< j<n}(v\circ T^i)\otimes(v\circ T^j).
\]
Recall that we have the WIP $W_n\to_\mu W$ in $D([0,1],\R^m)$ where $W$ is $m$-dimensional Brownian motion with covariance $\Sigma$ given by formulas~\eqref{eq:Sigma1} and~\eqref{eq:Sigma2}.
By~\cite[Thm.~4.3]{KM16}, we have the {\em enhanced WIP} (called iterated WIP in~\cite{KM16})
\[
(W_n,\BBW_n)\to_\mu (W,\BBW) \quad\text{in $\DD([0,1],\R^m\times(\R^m \otimes \R^m))$},
\]
where $W(t)=\int_0^t W\otimes dW+\Gamma t$.
Here $\int W\otimes dW$ is the It\^o integral and the area drift $\Gamma \in \R^m \otimes \R^m$ is given by
\begin{align} \label{eq:Gamma1}
\Gamma =\lim_{n\to\infty} \int_M \BBW_n(1)\,d\mu,
\end{align}
and satisfies
\begin{align} \label{eq:Gamma2}
\Gamma =\sum_{n=1}^\infty \int_M v\otimes (v\circ T^n)\,d\mu.
\end{align}
The proof of the enhanced WIP in~\cite{KM16} has two main steps.  The first step is to apply~\cite[Thm.~2.2]{KP91} (alternatively~\cite{JMP89}) to the martingale component $m$ taking into consideration that $\{m\circ T^n,\,n\ge0\}$ is a reverse martingale difference sequence.  This yields an enhanced WIP with zero area drift.  The contribution from the coboundary $\chi\circ T-\chi$ is no longer negligible, but a general result~\cite[Thm.~3.1]{KM16} for mixing dynamical systems and $L^2$ coboundaries yields the (typically nonzero) area drift $\Gamma$.

Again, strong distributional convergence applies by~\cite[Thm.~1]{Zweimuller07}.  The hypotheses in~\cite{Zweimuller07} are verified in the course of the proof of~\cite[Lem.~6.3]{KM16}.  Hence
$(W_n,\BBW_n)\to_\lambda (W,\BBW)$ in $\DD([0,1],\R^m\times (\R^m \otimes \R^m))$
for all absolutely continuous Borel probability measures $\lambda$.

For nonuniformly expanding maps, the enhanced WIP goes through unchanged provided the martingale-coboundary decomposition holds in $L^2$ (with the usual caveat that
formula~\eqref{eq:Gamma2} only holds when $\mu$ is mixing).
  This covers the situation~\eqref{eq:poly} with $\beta>2$.
As before, extra work is required for the case $\beta\in(1,2]$.
By~\cite[Thm.~10.2]{KM16}, the enhanced WIP holds for all $\beta>1$ for nonuniformly expanding maps modelled by Young towers, including the intermittent maps~\eqref{eq:LSV}.   For such maps, we obtain optimal results: the enhanced WIP holds precisely when the ordinary CLT holds.

Turning to moments, an immediate consequence of the $L^p$ martingale-coboundary decomposition, $p\ge2$, and Burkh\"older's inequality is that
$\|v_n\|_p=O(n^{1/2})$ where the implied constant depends on $v$ and $p$.
As noted in~\cite{MN08,M09b,MTorok12},
in fact
\begin{equation} \label{eq:v_n max}
  \|v_n\|_{2p}=O(n^{1/2})
\end{equation}
and this holds for $L^p$ martingale-coboundary decompositions with $p\ge1$.
This improved result uses the additional information
that $v\in L^\infty$ and a maximal inequality of~\cite{Rio00}.

In the situation~\eqref{eq:poly}, we have the martingale-coboundary decomposition for all $1 \le p < \beta$.
As shown in~\cite{MN08,M09b}, the estimate~\eqref{eq:v_n max} is sharp; $\|v_n\|_q=O(n^{1/2})$ for $q<2\beta$ but there are examples where the estimate typically fails for $q>2\beta$.

We also require estimates for the enhanced (iterated) moment
$S_n=\sum_{0\le i\le j<n}(v\circ T^i)\otimes(v\circ T^j)$.
Assuming an $L^p$ martingale-coboundary decomposition with $p\ge3$ and $v\in L^\infty$, it was shown in~\cite[Prop.~7.1]{KM16} that
$\|S_n\|_{2p/3}=O(n)$.  In the Young tower setting, this has been improved in~\cite{KKMprep} to $\|S_n\|_p=O(n)$ for $p\ge1$.

The moment estimates discussed above are all in $L^p$ spaces with respect to $\mu$.  Clearly if $\lambda\ll\mu$ and $d\lambda/d\mu\in L^\infty$ then the same moment estimates hold also with respect to $\lambda$.  In particular, we can take $\lambda=\Vol$ for the $C^2$ uniformly expanding maps.
For the intermittent maps~\eqref{eq:LSV}  it is standard that $d\mu/d\Leb$ is bounded below, so we can take $\lambda=\Leb$.

\subsubsection{Hyperbolic diffeomorphism} \label{sec:NUH2}

For Axiom~A diffeomorphisms and Young towers with exponential tails, we saw
in Subsection~\ref{sec:NUH} that there is an $L^p$ martingale-coboundary decomposition for all $p$.  Also, for Young towers with exponential contraction and polynomial tails $\mu_Y(\tau>n)=O(n^{-(\beta+1)})$, we have an $L^p$ martingale-coboundary decomposition for $p<\beta$.   By~\cite{KM16,KKMprep}, we obtain the enhanced WIP provided $\beta>1$ and optimal moment estimates
$\|v_n\|_{2p}=O(n^{1/2})$ and $\|S_n\|_{p}=O(n)$ for $1\le p<\beta$.
As before, the covariance $\Sigma$ and drift $\Gamma$ satisfy~\eqref{eq:Sigma1} and~\eqref{eq:Gamma1}, and under additional mixing assumptions we have~\eqref{eq:Sigma2} and~\eqref{eq:Gamma2}.

For general Young towers with polynomial tails $\mu_Y(\tau>n)=O(n^{-\beta})$
the enhanced WIP still holds for all $\beta>1$ by~\cite{MV16}
but currently we only have the moment estimates
$\|v_n\|_{2p}=O(n^{1/2})$ and $\|S_n\|_{2p/3}=O(n)$ for $3\le p<\beta$
from~\cite{KM16}.  Obtaining optimal moment estimates here is the subject of work in progress.

\subsubsection{Hyperbolic flows} \label{sec:flow2}

The methods mentioned in Subsection~\ref{sec:flow} for passing the WIP from
(non)uniformly hyperbolic diffeomorphisms to
(non)uniformly hyperbolic flows work just as well for the enhanced WIP~\cite[Sec.~6]{KM16}.
Define
\[
W_n(t)=n^{-1/2}\int_0^{nt} v\circ g_s\,ds\;, \qquad
\BBW_n(t)=n^{-1}\int_0^{nt}\int_0^s (v\circ g_r)\otimes(v\circ g_s)\,dr\,ds\;.
\]
Then $(W_n,\BBW_n)\to_\lambda (W,\BBW)$ where $W$ is Brownian motion with covariance $\Sigma$ and
$\BBW(t)=\int_0^t W\otimes dW+\Gamma_I t$.
Here $\Sigma=\lim_{n\to\infty}\E_\lambda( W_n(1)\otimes W_n(1))$ as before,  and
$\Gamma_I=\lim_{n\to\infty}\E_\lambda \BBW_n(1)$.
Alternatively,
$\BBW(t)=\int_0^t W\otimes\circ dW+\Gamma t$,
where $\Gamma=\Gamma_I-\frac12\Sigma$ is skew-symmetric.
Under extra mixing assumptions,
\begin{align} \label{eq:Sigflow}
  \Sigma &= \int_0^\infty \int_\Lambda \{v\otimes (v\circ g_t)+(v\circ g_t)\otimes v\}\,d\mu\,dt\;,
  \\
\label{eq:Gamflow}
  \Gamma &= \frac12\int_0^\infty \int_\Lambda \{v\otimes (v\circ g_t)-(v\circ g_t)\otimes v\}\,d\mu\,dt\;.
\end{align}

The situation for moments extends in a straightforward way~\cite[Sec.~7.2]{KM16}.
Define
\[
v_n=\int_0^{n} v\circ g_s\,ds\;, \qquad
S_n=\int_0^{n}\int_0^s (v\circ g_r)\otimes(v\circ g_s)\,dr\,ds\;.
\]
Then the estimates described in Subsection~\ref{sec:NUE} apply equally here.

 \subsection{Continuous dynamics}
\label{subsec:cont dynamics}
We present now an application of rough path theory to fast-slow systems where the fast variable satisfies a suitable WIP.
We consider first continuous dynamics~\eqref{eq:ODE} in the case of multiplicative noise, i.e.,
\begin{align} \label{eq:prod}
\dot x_\eps &=a(x_\eps)+\eps^{-1}b(x_\eps)v(y_\eps)\;, \qquad
\dot y_\eps =\eps^{-2}g(y_\eps) \; .
\end{align}
We consider the $\R^m$-valued path
\begin{align*}
 \Wve(t)=\eps\int_0^{t\eps^{-2}}v \circ g_{s}\,ds
\end{align*}
and rewrite the slow dynamics in the form of a controlled ODE,
$$
     dx_\varepsilon = a(x_\varepsilon) dt + b(x_\eps) d W_\eps \ .
$$
Following Kelly--Melbourne~\cite{KM16}, this formulation invites an application of finite-dimensional rough path theory; the only modification relative to~\cite{KM16} is our present use of
$p$-variation rough path metrics, which leads to optimal moment assumptions and optimal regularity assumptions on the coefficients. The key is a suitable WIP on the level of rough paths, as discussed in Section~\ref{sec:RPs}.

To this end, we consider the following two assumptions on the fast dynamics.
Following the discussion in Section~\ref{sec:chaos2}, we see that a wide range of dynamics satisfy these assumptions.
For every $\eps > 0$, we let $\BBWve$ be the canonical second iterated integral of $\Wve$, and for $p \in (2,3)$, we consider the geometric $p$-rough path $\BWve := (\Wve,\BBWve)$.

\begin{assumption}\label{ass:enhanced WIP cont}
It holds that $(\Wve,\BBWve)\to(W,\BBW)$ as $\eps\to0$
in the sense of finite-dimensional distributions on $(M,\lambda)$,
where $W$ is an $m$-dimensional Brownian motion and $\BBW(t)=\int_0^t W\otimes \circ dW+\Gamma t$
for some $\Gamma\in\R^m \otimes \R^m$ deterministic.
\end{assumption}

\begin{assumption} \label{ass:momentproductcase} There exists $q>1$ and $K>0$ such that
\begin{align*}
&\Big\|\int_s^t v^i\circ g_r\,dr\Big\|_{L^{2q}(\lambda)}\le K|t-s|^{1/2}\;,
\\
&\Big\|\int_s^t\int_s^r v^i\circ g_u\,v^j\circ g_r\,du\,dr\Big\|_{L^{q}(\lambda)}\le K|t-s|\;,
\end{align*}
for all $s,t\ge0$ and $1\le i,j\le m$.
\end{assumption}

The first assumption identifies the possible limit points of $\Wve$ as a rough path; the second ensures that the WIP holds in a sufficiently strong  rough path topology as demonstrated by the following result.
As before, all path space norms ($p\var$, $\alpha\Hol$, etc.) are relative to the fixed interval $[0,1]$.

\begin{prop}\label{prop:pvar tight}
Under Assumption~\ref{ass:momentproductcase}, it holds that for all $p \in (2,3)$
\[
\sup_{\eps \in (0,1]}\E \vvvert\BWve\vvvert^{2q}_{p\var} < \infty\;,
\]
and for all $\alpha \in (0,\frac{1}{2}-\frac{1}{2q})$
\[
\sup_{\eps \in (0,1]}\E \vvvert\BWve\vvvert^{2q}_{\alpha\Hol} < \infty\;.
\]
\end{prop}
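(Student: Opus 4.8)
The plan is to reduce the statement to the uniform moment bound~\eqref{equ:UME} for the enhanced process $\BWve$ and then to invoke the two relevant regularisation results: the Kolmogorov--Garsia--Rodemich--Rumsey criterion for geometric rough paths for the $\alpha\Hol$ bound, and the Besov--variation embedding for the $p\var$ bound. (That each norm is a.s.\ finite for fixed $\eps$ is trivial, since $\BWve$ is the canonical lift of a smooth path; the content is the uniformity in $\eps$ together with the stated exponents.) First I would do the (deterministic) scaling bookkeeping: $\Wve(s,t)=\eps\int_{s\eps^{-2}}^{t\eps^{-2}}v\circ g_r\,dr$, and, as $\BBWve$ is the canonical second iterated integral of the smooth path $\Wve$,
\[
\BBWve^{ij}_{s,t}=\int_s^t\!\!\int_s^r d\Wve^i_u\,d\Wve^j_r=\eps^2\int_{s\eps^{-2}}^{t\eps^{-2}}\!\!\int_{s\eps^{-2}}^{r}v^i\circ g_u\;v^j\circ g_r\,du\,dr\,;
\]
changing variables, the two inequalities of Assumption~\ref{ass:momentproductcase} become $\|\Wve^i(s,t)\|_{L^{2q}(\lambda)}\le K|t-s|^{1/2}$ and $\|\BBWve^{ij}_{s,t}\|_{L^{q}(\lambda)}\le K|t-s|$, \emph{uniformly} in $\eps\in(0,1]$, since the prefactors $\eps$, $\eps^2$ cancel exactly against $(\eps^{-2}|t-s|)^{1/2}$, $\eps^{-2}|t-s|$. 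Summing over the finitely many indices and using $\vvvert\BWve(s,t)\vvvert=|\Wve(s,t)|+|\BBWve_{s,t}|^{1/2}$ gives $\sup_{\eps\in(0,1]}\E[\vvvert\BWve(s,t)\vvvert^{2q}]\le C|t-s|^q$, which is~\eqref{equ:UME}.

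For the $\alpha\Hol$ estimate I would then quote the Kolmogorov-type tightness criterion for geometric rough paths from~\cite{FV10}: as $q>1$, a Garsia--Rodemich--Rumsey argument adapted to the multiplicative relation $\BBWve_{s,t}=\BBWve_{s,u}+\BBWve_{u,t}+\Wve_{s,u}\otimes\Wve_{u,t}$ converts the bound above into $\E[\vvvert\BWve\vvvert_{\alpha\Hol}^{2q}]\le C'$ for every $\alpha<\tfrac12-\tfrac1{2q}$, with $C'$ depending only on $q$, $\alpha$ and the constant in~\eqref{equ:UME} and hence uniform in $\eps$.

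For the $p\var$ estimate one cannot route through the Hölder bound: when $q$ is close to $1$ the admissible Hölder exponent $\tfrac12-\tfrac1{2q}$ may fall below $\tfrac1p$ for $p$ near $3$. Instead I would apply the Besov--variation embedding of~\cite{FV06} in its rough-path form, which upgrades~\eqref{equ:UME} with $q>1$ directly to $\sup_{\eps}\E[\vvvert\BWve\vvvert_{p\var}^{2q}]<\infty$ for \emph{every} $p>2$. Since $\vvvert\cdot\vvvert_{p\var}$ is non-increasing in $p$, it suffices to treat $p\in(2,\min(3,2q))$; for such $p$, from $\E[\vvvert\BWve(s,t)\vvvert^{2q}]\le C|t-s|^q$ one gets, along the level-$n$ dyadic partition $D_n$, the bound $\E\big[\sum_{[s,t]\in D_n}\vvvert\BWve(s,t)\vvvert^{p}\big]\lesssim 2^{n(1-p/2)}$, which is summable, and the Besov--variation embedding (which also absorbs the multiplicative correction in the level-two component) turns this into the desired uniform $2q$-th moment bound. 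The main obstacle is conceptual, not computational: it is precisely this passage from $2+$ moments to $p\var$ control for \emph{all} $p>2$ that makes the $p$-variation topology strictly more economical than the Hölder topology of~\cite{KM16}; the scaling, the summation over components, and the monotonicity reduction in $p$ are all routine.
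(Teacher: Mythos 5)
Your proposal is correct and follows essentially the same route as the paper: both reduce Assumption~\ref{ass:momentproductcase}, after the scaling change of variables, to the uniform increment bound $\E\bigl[\vvvert\BWve(s,t)\vvvert^{2q}\bigr]\lesssim|t-s|^{q}$, i.e.~\eqref{equ:UME}, and then invoke the Besov--H\"older (equivalently, Kolmogorov/GRR) and Besov--variation embeddings of~\cite{FV06,FV10}. The paper merely phrases the increment estimate via the Carnot--Carath\'eodory metric on the step-$2$ free nilpotent group and routes both conclusions through the single fractional Sobolev bound $\E\bigl[|\BWve|^{2q}_{W^{\beta,2q}}\bigr]<\infty$ for $\beta<1/2$, which is what your Kolmogorov and dyadic arguments amount to.
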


\begin{proof}
Viewing $\BWve$ as a path in $G^2(\R^m) \subset \R^m \oplus (\R^m \otimes \R^m)$, the step-$2$ free nilpotent group equipped with Carnot-Carath{\'e}odory metric $d$, it holds that
\[
|d(\BWve(s),\BWve(t))|_{L^{2q}(\lambda)} \lesssim |\Wve(s,t)|_{L^{2q}(\lambda)} + |\BBWve(s,t)|^{1/2}_{L^{q}(\lambda)} \lesssim |t-s|^{1/2},
\]
where the final bound follows from Assumption~\ref{ass:momentproductcase}.
Let $\beta \in [0,1/2)$. Then $\E[|\BWve|^{2q}_{W^{\beta,2q}}]$ is uniformly bounded in $\eps > 0$, and thus, by the Besov-H{\"o}lder and Besov-variation embeddings~\cite{FV06} (see also~\cite[Cor.~A.2,~A.3]{FV10}), so is $\E\vvvert\BWve\vvvert_{(\beta-1/(2q))\Hol}^q$ and $\E\vvvert\BWve\vvvert_{(1/\beta)\var}^{2q}$.
\end{proof}

\begin{thm}\label{thm:prod}
Suppose Assumptions~\ref{ass:enhanced WIP cont} and~\ref{ass:momentproductcase} hold.
\begin{enumerate}[label=(\roman*)]
\item \label{point:prod 1} For every $p > 2$, it holds that $\BWve \to_\lambda \BW$ in the $p$-variation rough path topology.
\item \label{point:prod 2} Let $a\in C^{1+}(\R^d,\R^d)$, $b\in C^{2+}(\R^d,\R^{d\times m})$, and let $x_\eps$ be the solution to~\eqref{eq:prod}.
Then $x_\eps\to_{\lambda} X$ in $C^{p\var}([0,1],\R^d)$ for every $p>2$, where $X$ is the solution to the SDE
\begin{equation}\label{eq:prod SDE}
dX=\Big(a(X)+\sum_{i,j=1}^m \Gamma^{i,j}\sum_{k=1}^d b^{i,k}\partial_k b^j(X)\Big)dt+ b(X) \circ \, dW, \quad X(0)=\xi.
\end{equation}
\end{enumerate}
\end{thm}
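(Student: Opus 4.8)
The plan is to verify the hypotheses of the Stratonovich-type limit theorem (Theorem~\ref{thm:strat ult}) for the family $(\BWve)$ and then read off both assertions from it. Two inputs will be needed. First, $\BWve \to \BW$ weakly in the uniform topology: Assumption~\ref{ass:enhanced WIP cont} supplies convergence of the finite-dimensional distributions, while Proposition~\ref{prop:pvar tight} supplies, for any fixed $p'\in(2,3)$, the uniform moment bound $\sup_\eps \E\vvvert\BWve\vvvert^{2q}_{p'\var}<\infty$ together with a Kolmogorov-type modulus of continuity for the $G^2(\R^m)$-valued path $\BWve$; the latter gives tightness of $\{\law(\BWve)\}_\eps$ in $\CC$ with the uniform topology, and finite-dimensional convergence then identifies the limit. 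Second, tightness of $\{\vvvert\BWve\vvvert_{p\var}\}_\eps$ for $p\in(2,3)$, which is immediate from Proposition~\ref{prop:pvar tight} and Chebyshev. (Since each $\BWve$ is weakly geometric and this passes to the limit, the symmetric part of $\BBW(t)=\int_0^t W\otimes\circ dW+\Gamma t$ equals $\tfrac12 W_t\otimes W_t$, so $\Gamma$ is skew-symmetric, in accordance with Theorem~\ref{thm:strat ult}.)

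With these inputs, Theorem~\ref{thm:strat ult}\ref{point:strat 1} gives $\BWve \to_\lambda \BW$ weakly in the $p$-variation rough path topology for every $p>2$, which is part~\ref{point:prod 1}. For part~\ref{point:prod 2}, the key observation is that $v$ is continuous and $g_s$ is a flow, so $\Wve$ is $C^1$; in particular $\Wve$ is a continuous BV (hence continuous semimartingale) path, and the solution $x_\eps$ of~\eqref{eq:prod} is precisely the solution of the Stratonovich SDE $dx_\eps = a(x_\eps)\,dt + b(x_\eps)\circ d\Wve$, Stratonovich integration against a BV path being Riemann--Stieltjes integration (cf.\ the footnote to Theorem~\ref{thm:strat ult}). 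I would then apply Theorem~\ref{thm:strat ult}\ref{point:strat 2} with $V_0=a\in C^{1+}$ and $V=b$, which lies in $C^{p+}$ once $p$ is taken close enough to $2$ (the remaining $p>2$ then come from the $p'$-variation statement of that theorem, $p'>p$), with area-drift process $\Gamma_t=\Gamma t$ and initial datum $\xi$. This yields $x_\eps \to_\lambda X$ in the $p$-variation topology for every $p>2$, where
\[
dX = a(X)\,dt + DV(X)V(X)\,d\Gamma_t + b(X)\circ dW = \Big(a(X) + DV(X)V(X)\,\Gamma\Big)dt + b(X)\circ dW .
\]
Expanding $DV(X)V(X)\Gamma$ in coordinates, using the convention of the footnote to~\eqref{def:davieRDE}, gives $\partial_\alpha b_\gamma(X)\,b^\alpha_\beta(X)\,\Gamma^{\beta\gamma}=\sum_{i,j=1}^m\Gamma^{i,j}\sum_{k=1}^d b^{i,k}\partial_k b^j(X)$, which is exactly the drift in~\eqref{eq:prod SDE}.

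Essentially all of the substance sits in the results being invoked: the moment estimates of Assumption~\ref{ass:momentproductcase} (verified for the dynamical examples in Section~\ref{sec:chaos2}), the Besov--variation embedding behind Proposition~\ref{prop:pvar tight}, and the deterministic RDE continuity Theorem~\ref{thm:ult} underlying Theorem~\ref{thm:strat ult}. Within the present argument, the points demanding care are (a) the passage from the merely finite-dimensional convergence of Assumption~\ref{ass:enhanced WIP cont} to process-level convergence in a genuine rough-path topology, which is exactly where the tightness of Proposition~\ref{prop:pvar tight} is indispensable and where $p$-variation is essential (only $q>1$, not $q>3$, is available); and (b) the bookkeeping of the area drift $\Gamma$ needed to land precisely the Stratonovich correction in~\eqref{eq:prod SDE}. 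I do not expect any single step to be genuinely hard once the cited machinery is in place.
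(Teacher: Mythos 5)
Your proposal is correct and follows essentially the same route as the paper: both parts are read off from the Stratonovich-type limit theorem (Theorem~\ref{thm:strat ult}), with the uniform-topology convergence supplied by Assumption~\ref{ass:enhanced WIP cont} plus the tightness of Proposition~\ref{prop:pvar tight}, and with $x_\eps$ recognised as the solution of the controlled ODE $dx_\eps = a(x_\eps)\,dt + b(x_\eps)\,d\Wve$. The paper's proof is just a terser version of yours; your additional bookkeeping (f.d.d.\ convergence plus tightness $\Rightarrow$ weak convergence in $\CC$, and the coordinate check of the drift) correctly fills in the steps the paper leaves implicit.
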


\begin{rem} If follows from Assumptions~\ref{ass:enhanced WIP cont} and \ref{ass:momentproductcase} that the covariance matrix $\Sigma$ and area drift $\Gamma$ are given by
\[
\Sigma = \lim_{\eps \to 0} \E_\lambda ( \Wve(1) \otimes \Wve(1) ),
\qquad
\Gamma = \lim_{\eps \to 0} \E_\lambda \BBWve(1) - \frac{1}{2} \Sigma.
\]
Under additional mixing assumptions, formulas~\eqref{eq:Sigflow} and~\eqref{eq:Gamflow} hold.
\end{rem}

\begin{proof}
\ref{point:prod 1} follows from part~\ref{point:strat 1} of Theorem~\ref{thm:strat ult}.
For~\ref{point:prod 2}, observe that $x_\eps$ solves the ODE
\[
d x_\eps = a(x_\eps)dt + b(x_\eps)d\Wve\;.
\]
We are thus in the framework of part~\ref{point:strat 2} of Theorem~\ref{thm:strat ult}, from which the conclusion follows.
\end{proof}

 \subsection{Discrete dynamics}
 \label{subsec:discrete dynamics}
We now discuss discrete dynamics~\eqref{equ:discreteFastSlowGeneral} in the case of multiplicative noise, i.e.,
\begin{equation*}
X^{(n)}_{j+1} = X^{(n)}_j + n^{-1}a(X_j^{(n)}) + n^{-1/2}b(X_j^{(n)})v(Y_j)\;,
\end{equation*}
where, as before, $v : M \to \R^m$, $b : \R^d \to \R^{d \times m}$, and $a : \R^d \to \R^d$.
As usual, $X^{(n)}_0 = \xi \in \R^d$ is fixed and $Y_0$ is drawn randomly from a probability measure $\lambda$ on $M$.
To consider this system as a controlled ODE, we introduced the c{\`a}dl{\`a}g path
\begin{equation}\label{eq:x_n discrete}
x_{n} : [0,1] \to \R^d\;, \quad x_{n}(t) = X^{(n)}_{\floor{nt}}\; ,
\end{equation}
as well as the the c{\`a}dl{\`a}g paths
\begin{align*}
\Wvn &: [0,1] \to \R^m\;, \quad \Wvn(t) = n^{-1/2}\sum_{j=0}^{\floor{nt}-1} v(Y_j) \;,
\\
z_n &: [0,1] \to \R\;, \quad z_n(t) = \floor{tn}/n \;.
\end{align*}
It is easy to verify that $x_n$ defined by~\eqref{eq:x_n discrete} is the unique solution of the controlled (discontinuous) ODE
\begin{equation}\label{eq:prodDiscrete}
dx_n = a(x_n^-)dz_n + b(x_n^-) dW_n\;, \quad x_n(0) = \xi \in \R^d\;.
\end{equation}
Let us denote by $\BBWvn$ the canonical second iterated integral of $\Wvn$
\[
\BBW^{i,j}_{v,n}(s,t) = \int_{(s,t]}  (W^{i,-}_{v,n} (r) - W^i_{v,n} (s)) dW^j_{v,n}(r)\;, \quad 1 \leq i,j \leq m\;.
\]
Consider the following analogues of Assumptions~\ref{ass:enhanced WIP cont} and~\ref{ass:momentproductcase}.

\begin{assumption}\label{ass:enhanced WIP disc}
It holds that $(\Wvn, \BBWvn) \to (W,\BBW)$ as $\eps\to0$
in the sense of finite-dimensional distributions on $(M,\lambda)$,
where $W$ is a Brownian motion in $\R^m$ and $\BBW(t) = \int_0^t W\otimes dW + \Gamma t$ for some $\Gamma\in \R^{m\times m}$ deterministic.
\end{assumption}

\begin{assumption} \label{ass:momentDiscreteProd} There exists $q>1$ and $K>0$ such that for all $n \geq 1$ and $0 \leq k,l \leq n$,
\begin{align*}
&\big\|\Wvn(l/n) - \Wvn(k/n)\big\|_{L^{2q}(\lambda)}\le Kn^{-1/2}|l-k|^{1/2}\;,
\\
&\big\|\BBWvn(k/n,l/n)\big\|_{L^{q}(\lambda)}\le Kn^{-1}|l-k|\;.
\end{align*}
\end{assumption}

\begin{prop}\label{prop:pvar tight disc}
Under Assumption~\ref{ass:momentDiscreteProd}, for all $p \in (2,3)$
\[
\sup_{n \geq 1}\E \vvvert (\Wvn, \BBWvn) \vvvert^{2q}_{p\var} < \infty\;.
\]
\end{prop}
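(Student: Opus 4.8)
The plan is to reduce to the continuous setting of Proposition~\ref{prop:pvar tight} by interpolating $\Wvn$ linearly through the grid $\{k/n:0\le k\le n\}$. Two elementary observations make this work. First, since $\Wvn$ and $\BBWvn$ are constant on each $[k/n,(k+1)/n)$, replacing any partition point $r$ of $[0,1]$ by $\floor{nr}/n$ leaves the Riemann sums defining $\vvvert (\Wvn,\BBWvn) \vvvert_{p\var}$ unchanged (the terms that collapse vanish); hence $\vvvert (\Wvn,\BBWvn) \vvvert_{p\var,[0,1]}$ equals the ``grid $p$-variation'', namely the supremum over increasing subsequences $0=k_0<\dots<k_N=n$ of the sum of the $|\Wvn(k_{j+1}/n)-\Wvn(k_j/n)|^p$ together with the corresponding level-$2$ sum built from the $\BBWvn(k_j/n,k_{j+1}/n)$. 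Second, writing $\Delta_i:=\Wvn((i+1)/n)-\Wvn(i/n)$ for $0\le i<n$, Assumption~\ref{ass:momentDiscreteProd} with $l=k+1$ gives $\|\Delta_i\|_{L^{2q}(\lambda)}\le Kn^{-1/2}$, so that
\[
\Big\|\sum_{i=0}^{n-1}|\Delta_i|^2\Big\|_{L^q(\lambda)}\le\sum_{i=0}^{n-1}\|\Delta_i\|_{L^{2q}(\lambda)}^2\le K^2.
\]

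Next I would introduce $\widetilde W_n\in C([0,1],\R^m)$, the piecewise linear path agreeing with $\Wvn$ on the grid, and its canonical (weakly geometric) level-$2$ lift $\widetilde{\BBW}_n(s,t)=\int_s^t(\widetilde W_n(r)-\widetilde W_n(s))\otimes d\widetilde W_n(r)$. A cell-by-cell computation gives, for grid points,
\[
\widetilde{\BBW}_n(k/n,l/n)=\BBWvn(k/n,l/n)+\tfrac12\sum_{k\le i<l}\Delta_i\otimes\Delta_i,
\]
the bracketed term being the usual It\^o-versus-geometric correction. Combining this with the trivial in-cell bounds (within a cell the increment of $\widetilde W_n$ is a fraction $\theta\in[0,1]$ of $\Delta_i$, and $\theta\le\theta^{1/2}$ supplies the square-root scaling at sub-grid resolution), with Chen's relation at the second level, and with Assumption~\ref{ass:momentDiscreteProd}, one checks that uniformly in $n\ge1$ and $0\le s\le t\le 1$
\[
\|\widetilde W_n(t)-\widetilde W_n(s)\|_{L^{2q}(\lambda)}\lesssim|t-s|^{1/2},\qquad\|\widetilde{\BBW}_n(s,t)\|_{L^q(\lambda)}\lesssim|t-s|.
\]
These are exactly the moment inputs used in the proof of Proposition~\ref{prop:pvar tight} (the Carnot--Carath\'eodory distance of the lifted path then has $L^{2q}(\lambda)$-modulus $\lesssim|t-s|^{1/2}$, and the Besov--variation embedding of~\cite{FV06} applies), so that argument gives $\sup_{n\ge1}\E\vvvert (\widetilde W_n,\widetilde{\BBW}_n) \vvvert_{p\var}^{2q}<\infty$ for every $p\in(2,3)$.

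Finally I would transfer the bound. On the grid $\widetilde W_n$ and $\Wvn$ coincide, so the grid $p$-variation of $\Wvn$ is at most $\|\widetilde W_n\|_{p\var,[0,1]}$; at the second level, the triangle inequality for the $(p/2)$-variation seminorm (valid as $p/2\ge1$) together with the identity above bounds the grid $(p/2)$-variation of $\BBWvn$ by that of $\widetilde{\BBW}_n$ plus the $(p/2)$-variation of $\tfrac12\sum_{k\le i<l}\Delta_i\otimes\Delta_i$, and the latter is $\le(\tfrac12\sum_{i=0}^{n-1}|\Delta_i|^2)^{p/2}$ by super-additivity of $x\mapsto x^{p/2}$ on $[0,\infty)$ applied to the numbers $|\Delta_i|^2\ge0$. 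Using the grid-variation identity from the first step,
\[
\vvvert (\Wvn,\BBWvn) \vvvert_{p\var}\lesssim\vvvert (\widetilde W_n,\widetilde{\BBW}_n) \vvvert_{p\var}+\Big(\sum_{i=0}^{n-1}|\Delta_i|^2\Big)^{1/2},
\]
whence $\E\vvvert (\Wvn,\BBWvn) \vvvert_{p\var}^{2q}\lesssim\E\vvvert (\widetilde W_n,\widetilde{\BBW}_n) \vvvert_{p\var}^{2q}+\E\big(\sum_{i=0}^{n-1}|\Delta_i|^2\big)^{q}$, and both terms are bounded uniformly in $n$ by the second step and the first step.

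I expect this last step to be the main obstacle: one has to separate cleanly the It\^o-type lift $\BBWvn$ from the geometric lift $\widetilde{\BBW}_n$, recognise the difference as the discrete quadratic variation $\tfrac12\sum_i\Delta_i\otimes\Delta_i$, and use that — because $p/2>1$ — this correction has $(p/2)$-variation controlled by $\sum_i|\Delta_i|^2$, whose $L^q(\lambda)$-norm was bounded in the first step. An alternative that avoids interpolation is to prove directly a c\`adl\`ag analogue of the Besov--variation embedding: the discrete Besov energy $n^{2q\beta-1}\sum_{k\ne l}|\Wvn(k/n)-\Wvn(l/n)|^{2q}|k-l|^{-1-2q\beta}$ has expectation bounded uniformly in $n$ for every $\beta<\tfrac12-\tfrac1{2q}$ (the singularity that makes the continuous Besov norm of a step function infinite is absent on the grid since $|k-l|\ge1$), and a discrete Garsia--Rodemich--Rumsey argument turns this into a uniform bound on the grid $p$-variation, with the level-$2$ part handled the same way.
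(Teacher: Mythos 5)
Your argument is correct, but it is not the route the paper takes: the paper's entire proof is a one-line citation of~\cite[Prop.~6.17]{FZ17}, which is precisely a discrete/c\`adl\`ag Kolmogorov--Besov-type tightness criterion giving uniform $p$-variation moment bounds directly from grid-increment moment estimates of the form in Assumption~\ref{ass:momentDiscreteProd} --- in other words, essentially the ``alternative that avoids interpolation'' you sketch in your final sentences. What you actually carry out is a self-contained reduction to the continuous case: you identify the $p$-variation of the piecewise constant rough path with its grid $p$-variation, pass to the piecewise linear interpolation and its geometric lift, and observe that on grid points the geometric lift differs from the It\^o-type lift $\BBWvn$ exactly by the discrete quadratic variation $\tfrac12\sum_i\Delta_i\otimes\Delta_i$, whose $(p/2)$-variation is controlled by $\sum_i|\Delta_i|^2$ (superadditivity of $x\mapsto x^{p/2}$), with $\bigl\|\sum_i|\Delta_i|^2\bigr\|_{L^q}\le K^2$; the interpolated path then satisfies the hypotheses of Proposition~\ref{prop:pvar tight} and the continuous Besov--variation embedding of~\cite{FV06} applies. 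All the steps check out (including the grid-variation identity for the It\^o-type second level, the Chen-relation bookkeeping for off-grid times, and the triangle inequality for the $(p/2)$-variation seminorm), so this is a valid alternative proof. The trade-off is clear: the paper's citation is shorter and hides the combinatorics inside~\cite{FZ17}, whereas your interpolation argument is longer but reuses only machinery already present in the continuous part of the paper and makes explicit the It\^o-versus-geometric correction term, which is conceptually useful when comparing the discrete (It\^o) and continuous (Stratonovich) limit theorems.
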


\begin{proof}
This is a direct application of~\cite[Prop.~6.17]{FZ17}.
\end{proof}

\begin{rem} If follows from Assumptions~\ref{ass:enhanced WIP disc} and \ref{ass:momentDiscreteProd} that the covariance matrix $\Sigma$ and the area drift $\Gamma$ are given by
\[
\Sigma = \lim_{n \to \infty} \E_\lambda ( \Wvn(1) \otimes \Wvn(1) ),
\qquad
\Gamma = \lim_{n \to \infty} \E_\lambda \BBWvn(1).
\]
Under additional mixing assumptions, formulas~\eqref{eq:Sigma2} and~\eqref{eq:Gamma2} hold.
\end{rem}

Combining Theorem~\ref{thm:ito ult} and Proposition~\ref{prop:pvar tight disc}, we arrive at the following convergence result which relaxes the moment conditions required in~\cite{KM16}.

\begin{thm}
Suppose that Assumptions~\ref{ass:enhanced WIP disc} and~\ref{ass:momentDiscreteProd} hold.
\begin{enumerate}[label=(\roman*)]
\item \label{point:disc 1} For every $p > 2$, it holds that $\BWvn \to_\lambda \BW$ in the $p$-variation rough path topology.
\item \label{point:disc 2} Let $a\in C^{1+}(\R^d,\R^d)$, $b\in C^{2+}(\R^d,\R^{d\times m})$, and let $x_n$ be the solution to~\eqref{eq:prodDiscrete}.
Then $x_n\to_\lambda X$ in $C^{p\var}([0,1],\R^d)$ for all $p>2$, where $X$ is the solution to the SDE
\begin{equation*}\label{eq:prod SDE discrete}
dX=\Big(a(X)+\sum_{i,j=1}^m \Gamma^{i,j}\sum_{k=1}^d b^{i,k}\partial_k b^j(X)\Big)dt+ b(X) \, dW\;, \quad X(0)=\xi\;.
\end{equation*}
\end{enumerate}
\end{thm}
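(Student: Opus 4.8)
The plan is to derive both parts of the theorem as a direct corollary of the It\^o-type limit theorem (Theorem~\ref{thm:ito ult}) together with the uniform moment bound of Proposition~\ref{prop:pvar tight disc}, in exact parallel with the way Theorem~\ref{thm:prod} was obtained from Theorem~\ref{thm:strat ult}. First I would set up the hypotheses of Theorem~\ref{thm:ito ult}. For each fixed $n$ the path $\Wvn$ is piecewise constant, hence of finite variation, hence a c\`adl\`ag semimartingale under its own filtration, and its canonical It\^o lift is precisely the second iterated integral $\BBWvn$ displayed before the statement (for a pure-jump finite-variation path the It\^o integral $\int (\Wvn^-)\,d\Wvn$ is the corresponding Riemann--Stieltjes sum); thus $\BWvn=(\Wvn,\BBWvn)$ plays the role of the It\^o-lifted rough path in Theorem~\ref{thm:ito ult}. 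By Assumption~\ref{ass:enhanced WIP disc}, $\BWvn$ converges in finite-dimensional distributions on $(M,\lambda)$ to $\BW=(W,\int_0^\cdot W\otimes dW+\Gamma t)$, which is of the required form $(B,\BBB+\Gamma_\cdot)$ with $\Gamma_t=\Gamma t$ a continuous BV process. Fixing $p_0\in(2,3)$, Proposition~\ref{prop:pvar tight disc} gives $\sup_{n}\E\vvvert\BWvn\vvvert^{2q}_{p_0\var}<\infty$; the limit being continuous, this $p_0$-variation moment bound yields tightness of $\BWvn$ in the uniform topology on $\DD$ (no Skorokhod subtlety), which together with the f.d.d.\ convergence upgrades to weak convergence $\BWvn\to_\lambda\BW$ in the uniform topology, and it also gives tightness of $\{\vvvert\BWvn\vvvert_{p_0\var}(\omega)\}$. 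Hence all hypotheses of Theorem~\ref{thm:ito ult} are in force.

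Part~\ref{point:disc 1} is then immediate from part~(i) of Theorem~\ref{thm:ito ult}: it yields $\BWvn\to_\lambda\BW$ in the $p'$-variation rough path topology for every $p'>p_0$, and since $p_0\in(2,3)$ can be taken arbitrarily close to $2$ this covers every $p>2$.

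For part~\ref{point:disc 2}, I would invoke the reformulation~\eqref{eq:prodDiscrete}, $dx_n=a(x_n^-)\,dz_n+b(x_n^-)\,d\Wvn$, $x_n(0)=\xi$, which is solved exactly by the path~\eqref{eq:x_n discrete}. This is of the form treated by part~(ii) of Theorem~\ref{thm:ito ult}, modulo the minor generalization stated in the Remark following that theorem, in which the drift $V_0(Y^{n,-}_t)\,dt$ is replaced by $V_0(Y^{n,-}_t)\,d\tau^n$ with $\tau^n(t)\to t$ uniformly and with uniform $1$-variation bounds: here $\tau^n=z_n$ is nondecreasing with $z_n(t)\to t$ uniformly and $\|z_n\|_{1\var}\le z_n(1)\le1$. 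Taking $V_0=a\in C^{1+}(\R^d,\R^d)$ and $V=b$, and choosing $p_0\in(2,3)$ close enough to $2$ that the hypothesis $b\in C^{2+}(\R^d,\R^{d\times m})$ guarantees $b\in C^{p_0+}$ (the regularity demanded in Theorem~\ref{thm:ult}), Theorem~\ref{thm:ito ult}(ii) gives $x_n\to_\lambda X$ in the uniform topology, with $\{\|x_n\|_{p_0\var}(\omega)\}$ tight and convergence also in the $p'$-variation uniform metric for every $p'>p_0$; letting $p_0\downarrow 2$ then yields convergence in $C^{p\var}([0,1],\R^d)$ for all $p>2$. The limit $X$ solves $dX_t=a(X^-_t)\,dt+DV(X^-_t)V(X^-_t)\,d\Gamma_t+b(X^-_t)\,dW_t$ with $\Gamma_t=\Gamma t$; since this $\Gamma_t$ is continuous, $X$ is continuous and the one-sided limits may be dropped. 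It remains only to unwind $DV(X)V(X)\Gamma$ in coordinates, which equals $\sum_{i,j=1}^m\Gamma^{i,j}\sum_{k=1}^d b^{i,k}(X)\,\partial_k b^j(X)$, so that the limiting equation is exactly the SDE in the statement.

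As for where the real work lies: once Assumption~\ref{ass:enhanced WIP disc} and Proposition~\ref{prop:pvar tight disc} are available, this proof is a short verification-and-assembly argument, so the genuine obstacles are external to it --- namely establishing the enhanced WIP of Assumption~\ref{ass:enhanced WIP disc} (handled for the relevant dynamical systems in Section~\ref{sec:chaos2}) and the uniform moment estimates underlying Proposition~\ref{prop:pvar tight disc} (which rest, via \cite[Prop.~6.17]{FZ17} and the Besov--variation embedding \cite{FV06}, on Assumption~\ref{ass:momentDiscreteProd}). Within the present argument the only points requiring a little care are the bookkeeping around the discrete time change $z_n$ (to fit the generalized-drift form of Theorem~\ref{thm:ito ult}) and the coordinate identification of $DV\,V\,\Gamma$; everything else follows mechanically from Theorem~\ref{thm:ito ult}.
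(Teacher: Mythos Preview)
Your proposal is correct and follows essentially the same approach as the paper, which simply indicates that the result follows by ``combining Theorem~\ref{thm:ito ult} and Proposition~\ref{prop:pvar tight disc}'' in parallel with the proof of Theorem~\ref{thm:prod}. You have accurately expanded this: the identification of $\BWvn$ as the It\^o lift of a piecewise constant semimartingale, the use of the Remark following Theorem~\ref{thm:ito ult} to accommodate the discrete time change $z_n$, and the coordinate unwinding of $DV\,V\,\Gamma$ are exactly the points one needs to check, and you handle them correctly.
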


\section{Extension to families and non-product case}\label{sec:exts}

Throughout this article, we restricted attention to the case of multiplicative noise given in product form.
The general form~\eqref{eq:ODE} was addressed in~\cite{KM17}, though with suboptimal moment assumptions.
By adapting the methods of this article to an infinite-dimensional rough paths setting similar to~\cite{KM17}, we are able to handle, with optimal moment assumptions, a generalisation of~\eqref{eq:ODE} of the form
\begin{align} \label{eq:ODE families}
\frac{d}{dt} x_\eps &=a_\eps(x_\eps,y_\eps) +\eps^{-1}b_\eps(x_\eps,y_\eps), \qquad
\frac{d}{dt} y_\eps =\eps^{-2}g_\eps(y_\eps)\;,
\end{align}
where $a_\eps,b_\eps,g_\eps$ now depend on $\eps$, and so does the probability measure $\lambda_\eps$ from which $y_\eps(0)$ is drawn randomly.
We assume we are also given a family $\mu_\eps$ of ergodic $g_{\eps,t}$-invariant probability measures on $M$, where $g_{\eps,t}$ is the flow generated by $g_\eps$; we require that $\int_M b_\eps(x,y)\,d\mu_\eps(y)=0$ for all $\eps\in[0,1]$ and $x\in\R^d$.

We note that a similar generalisation is also possible for the discrete dynamics~\eqref{equ:discreteFastSlowGeneral}, which was not addressed in~\cite{KM17} even in the $\eps$-independent setting.
Details are found in our forthcoming work~\cite{CFKMZPrep}.

Let $C_\eps^\eta(M,\R^m)$ be the space of $C^\eta$ functions
$v:M\to\R^m$ with $\int_M v\,d\mu_\eps=0$.
Fix $q \in (1,\infty]$, $\kappa, \bar\kappa > 0$, $\alpha > 2+\frac{d}{q}$.
Let $a_\eps \in C^{1+\bar\kappa,0}(\R^d\times M,\R^d)$ and $b_\eps \in C^{\alpha,\kappa}_\eps(\R^d\times M,\R^d)$ satisfying
\[
\sup_{\eps \in [0,1]} \|a_\eps\|_{C^{1+\bar\kappa,0}} < \infty\;,
\quad \sup_{\eps \in [0,1]} \|b_\eps\|_{C^{\alpha,\kappa}} < \infty\;,
\quad \lim_{\eps\to0}\|b_\eps-b_0\|_{C^{\alpha,\kappa}}=0\;.
\]
For $v \in C_\eps^\eta(M,\R^m)$, define
\[
  W_{v,\eps} (t) = \eps \int_0^{\eps^{-2}t} v \circ g_{\eps,s} \, ds,
  \qquad
  \BBW_{v, \eps} (t) = \int_0^t W_{v,\eps} \otimes dW_{v, \eps}
  .
\]
We require the following assumptions.
\begin{enumerate}
\item Moment bounds:
there exists $K > 0$ such that for all families $v_\eps,\,w_\eps\in C_\eps^\kappa(M)$,
it holds that
for all $s,t\ge0$ and $\eps \in [0,1]$,
\begin{align*}
&\Big\|\int_s^t v_\eps\circ g_{\eps,r}\,dr\Big\|_{L^{2q}(\lambda_\eps)}\le K\|v_\eps\|_{C^\kappa}|t-s|^{1/2}\;,
\\
&\Big\|\int_s^t\int_s^r v_\eps\circ g_{\eps,u}\,w_\eps\circ g_{\eps,r}\,du\,dr\Big\|_{L^{q}(\lambda_\eps)}\le K \|v_\eps\|_{C^\kappa}\|w_\eps\|_{C^\kappa} |t-s|\;.
\end{align*}
\item Enhanced WIP: there exists a bilinear operator $\mfB:C_0^\eta(M)\times C_0^\eta(M) \to\R$ such that for every family $v_\eps\in C_\eps^\kappa(M,\R^m)$ with $\lim_{\eps\to0}|v_\eps-v_0|_{C^\kappa}=0$,
there exists an $m$-dimensional Brownian motion $W$ such that
\[
(W_{v_\eps,\eps},\BBW_{v_\eps,\eps})\to_{\lambda_\eps} (W,\BBW), \quad\text{as $\eps\to0$},
\]
in the sense of finite-dimensional distributions, where
$\BBW^{i,j}(t)=\int_0^t W^i\, dW^j+\mfB(v_0^i,v_0^j)t$.
\item Convergence of drift: it holds that
\[
  \sup_{t\in[0,1]} \|V_\eps(t)-\bar a t\|_{C^{1+\bar\kappa}} \to_{\lambda_\eps} 0
  \quad \text{as }\eps \to 0,
\]
where $V_\eps(t) = \int_0^t a_\eps(\cdot,y_\eps(r))dr$
and $\bar a = \int_Ma_0(\cdot,y)d\mu_0(y)$.
\end{enumerate}

Consider the SDE
\begin{align}\label{eq:nonprod sde}
dX = \tilde a(X)\,dt + \sigma(X)\,dB\;, \quad X(0) = \xi\;,
\end{align}
where
$B$ is the standard Brownian motion in $\R^d$ and $\tilde a$ and $\sigma$ are given by
\begin{align*}
\tilde a^i(x) = \bar a^i(x) +  \sum_{k=1}^d \mfB(b_0^k(x,\cdot),\partial_k b_0^i(x,\cdot))\;,
\quad i=1,\dots,d\;,
\end{align*}
\begin{align*}
(\sigma (x) \sigma^T(x))^{ij} = \mfB(b^i_0(x,\cdot),b_0^j(x,\cdot)) + \mfB(b_0^j(x,\cdot),b^i_0(x,\cdot))\;, \quad i,j =1, \dots, d\;.
\end{align*}

Under assumptions (1-3) above, the SDE (28) has a unique weak solution $X$ and it holds that $x_\varepsilon \to_{\lambda_\varepsilon} X$.


\def\cprime{$'$}


\begin{thebibliography}{10}

\bibitem{AFLV11}
Jos\'e~F. Alves, Jorge~M. Freitas, Stefano Luzzatto, and Sandro Vaienti.
\newblock From rates of mixing to recurrence times via large deviations.
\newblock {\em Adv. Math.}, 228(2):1203--1236, 2011.

\bibitem{AlvesLuzzattoPinheiro05}
Jos\'e~F. Alves, Stefano Luzzatto, and Vilton Pinheiro.
\newblock Markov structures and decay of correlations for non-uniformly
  expanding dynamical systems.
\newblock {\em Ann. Inst. H. Poincar\'e Anal. Non Lin\'eaire}, 22(6):817--839,
  2005.

\bibitem{AlvesPinheiro10}
Jos\'e~F. Alves and Vilton Pinheiro.
\newblock Gibbs-{M}arkov structures and limit laws for partially hyperbolic
  attractors with mostly expanding central direction.
\newblock {\em Adv. Math.}, 223(5):1706--1730, 2010.

\bibitem{Anosov67}
D.~V. Anosov.
\newblock Geodesic flows on closed {R}iemannian manifolds of negative
  curvature.
\newblock {\em Trudy Mat. Inst. Steklov.}, 90:209, 1967.

\bibitem{AraujoMsub}
V.~{Araujo} and I.~{Melbourne}.
\newblock {Mixing properties and statistical limit theorems for singular
  hyperbolic flows without a smooth stable foliation}.
\newblock {\em ArXiv e-prints}, November 2017.

\bibitem{AMV15}
V.~Ara\'ujo, I.~Melbourne, and P.~Varandas.
\newblock Rapid mixing for the {L}orenz attractor and statistical limit laws
  for their time-1 maps.
\newblock {\em Comm. Math. Phys.}, 340(3):901--938, 2015.

\bibitem{BC17}
I.~Bailleul and R.~Catellier.
\newblock Rough flows and homogenization in stochastic turbulence.
\newblock {\em J. Differential Equations}, 263(8):4894--4928, 2017.

\bibitem{BenedicksYoung00}
Michael Benedicks and Lai-Sang Young.
\newblock Markov extensions and decay of correlations for certain {H}\'enon
  maps.
\newblock {\em Ast\'erisque}, (261):xi, 13--56, 2000.
\newblock G\'eom\'etrie complexe et syst\`emes dynamiques (Orsay, 1995).

\bibitem{Billingsley61}
Patrick Billingsley.
\newblock The {L}indeberg-{L}\'evy theorem for martingales.
\newblock {\em Proc. Amer. Math. Soc.}, 12:788--792, 1961.

\bibitem{Billingsley}
Patrick Billingsley.
\newblock {\em Convergence of probability measures},  \newblock {\em Wiley Series in Probability and Statistics: Probability and
              Statistics}.
\newblock John Wiley \& Sons Inc., New York, 1999.

\bibitem{Birkhoff31}
George~D. Birkhoff.
\newblock Proof of the ergodic theorem.
\newblock {\em Proc. Natl. Acad. Sci. U.S.A.}, 17(12):656--660, 1931.

\bibitem{Bowen75}
Rufus Bowen.
\newblock {\em Equilibrium states and the ergodic theory of {A}nosov
  diffeomorphisms}.
\newblock Lecture Notes in Mathematics, Vol. 470. Springer-Verlag, Berlin-New
  York, 1975.

\bibitem{BFH09}
Emmanuel Breuillard, Peter Friz, and Martin Huesmann.
\newblock From random walks to rough paths.
\newblock {\em Proc. Amer. Math. Soc.}, 137(10):3487--3496, 2009.

\bibitem{Brown71}
B.~M. Brown.
\newblock Martingale central limit theorems.
\newblock {\em Ann. Math. Statist.}, 42:59--66, 1971.

\bibitem{BuzziMaume02}
J\'er\^ome Buzzi and V\'eronique Maume-Deschamps.
\newblock Decay of correlations for piecewise invertible maps in higher
  dimensions.
\newblock {\em Israel J. Math.}, 131:203--220, 2002.

\bibitem{ChernovYoung00}
N.~Chernov and L.~S. Young.
\newblock Decay of correlations for {L}orentz gases and hard balls.
\newblock In {\em Hard ball systems and the {L}orentz gas}, volume 101 of {\em
  Encyclopaedia Math. Sci.}, pages 89--120. Springer, Berlin, 2000.

\bibitem{C15}
I.~{Chevyrev}.
\newblock {Random walks and L{\'e}vy processes as rough paths}.
\newblock {\em Probab. Theory Related Fields}, 170(3-4):891--932, 2018. 


\bibitem{CF17x}
I.~{Chevyrev} and P.~K. {Friz}.
\newblock {Canonical RDEs and general semimartingales as rough paths}.
{\em  Ann. Probab.} 47(1):420--463, 2019.

\bibitem{CFKMZPrep}
I. Chevyrev, P.K. Friz, A.~Korepanov, I.~Melbourne, and H. Zhang.
\newblock {Deterministic homogenization for discrete time fast-slow systems under optimal moment assumptions}. In preparation.

\bibitem{CunyMerlevede15}
Christophe Cuny and Florence Merlev\`ede.
\newblock Strong invariance principles with rate for ``reverse'' martingale
  differences and applications.
\newblock {\em J. Theoret. Probab.}, 28(1):137--183, 2015.

\bibitem{Davie07}
A. M. Davie.
\newblock Differential equations driven by rough paths: an approach via discrete approximation.
\newblock {\em Appl. Math. Res. Express. AMRX}, no. 2:Art. ID abm009, 40 pp, 2007.

\bibitem{DenkerPhilipp84}
Manfred Denker and Walter Philipp.
\newblock Approximation by {B}rownian motion for {G}ibbs measures and flows
  under a function.
\newblock {\em Ergodic Theory Dynam. Systems}, 4(4):541--552, 1984.

\bibitem{Donsker51}
Monroe~D. Donsker.
\newblock An invariance principle for certain probability limit theorems.
\newblock {\em Mem. Amer. Math. Soc.}, No. 6:12, 1951.

\bibitem{Eagleson76}
G.~K. Eagleson.
\newblock Some simple conditions for limit theorems to be mixing.
\newblock {\em Teor. Verojatnost. i Primenen.}, 21(3):653--660, 1976.

\bibitem{FGL15}
P.~K. {Friz}, P. Gassiat, and T.~J. Lyons.
\newblock Physical {B}rownian motion in a magnetic field as a rough path.
\newblock {\em Trans. Amer. Math. Soc.}, 367(11):7939--7955, 2015.

\bibitem{FH14}
P.~K. {Friz} and M.~Hairer.
\newblock {\em A Course on Rough Path Analysis, with an Introduction to
  Regularity Structures, Springer 2014}.
\newblock Universitext. Springer, 2014.

\bibitem{FS17}
P.~K. {Friz} and A. Shekhar.
\newblock General rough integration, L{\'e}vy rough paths and a L{\'e}vy--Kintchine-type formula.
\newblock {\em Ann. Probab.}, 45(4):2707--2765, 07 2017.

\bibitem{FV06}
P.~K. {Friz} and N.~Victoir.
\newblock A variation embedding theorem and applications.
\newblock {\em Journal of Functional Analysis}, 239(2):631 -- 637, 2006.

\bibitem{FV10}
P.~K. {Friz} and N.~Victoir.
\newblock {\em Multidimensional Stochastic Processes as Rough Paths}, volume
  120 of {\em Cambridge Studies in Advanced Mathematics}.
\newblock Cambridge University Press, Cambridge, 2010.

\bibitem{FZ17}
P.~K. {Friz} and H.~{Zhang}.
\newblock {Differential equations driven by rough paths with jumps}.
\newblock {\em Journal of Differential Equations}, 264(10):6226 -- 6301, 2018

\bibitem{Gordin69}
M.~I. Gordin.
\newblock The central limit theorem for stationary processes.
\newblock {\em Dokl. Akad. Nauk SSSR}, 188:739--741, 1969.

\bibitem{GM13}
Georg~A. Gottwald and Ian Melbourne.
\newblock Homogenization for deterministic maps and multiplicative noise.
\newblock {\em Proc. R. Soc. Lond. Ser. A Math. Phys. Eng. Sci.},
  469(2156):20130201, 16, 2013.

\bibitem{GM16}
Georg~A. Gottwald and Ian Melbourne.
\newblock Central limit theorems and suppression of anomalous diffusion for
  systems with symmetry.
\newblock {\em Nonlinearity}, 29(10):2941--2960, 2016.

\bibitem{Gouezel04}
S\'ebastien Gou\"ezel.
\newblock Central limit theorem and stable laws for intermittent maps.
\newblock {\em Probab. Theory Related Fields}, 128(1):82--122, 2004.

\bibitem{Gouezel07}
S\'ebastien Gou\"ezel.
\newblock Statistical properties of a skew product with a curve of neutral
  points.
\newblock {\em Ergodic Theory Dynam. Systems}, 27(1):123--151, 2007.

\bibitem{HofbauerKeller82}
Franz Hofbauer and Gerhard Keller.
\newblock Ergodic properties of invariant measures for piecewise monotonic
  transformations.
\newblock {\em Math. Z.}, 180(1):119--140, 1982.

\bibitem{JMP89}
A.~Jakubowski, J.~M\'emin, and G.~Pag\`es.
\newblock Convergence en loi des suites d'int\'egrales stochastiques sur
  l'espace {${\bf D}^1$} de {S}korokhod.
\newblock {\em Probab. Theory Related Fields}, 81(1):111--137, 1989.

\bibitem{Keller85}
Gerhard Keller.
\newblock Generalized bounded variation and applications to piecewise monotonic
  transformations.
\newblock {\em Z. Wahrsch. Verw. Gebiete}, 69(3):461--478, 1985.

\bibitem{Kelly16}
David Kelly.
\newblock Rough path recursions and diffusion approximations.
\newblock {\em Ann. Appl. Probab.}, 26(1):424--461, 2016.

\bibitem{KM16}
David Kelly and Ian Melbourne.
\newblock Smooth approximation of stochastic differential equations.
\newblock {\em Ann. Probab.}, 44(1):479--520, 01 2016.

\bibitem{KM17}
David Kelly and Ian Melbourne.
\newblock Deterministic homogenization for fast-slow systems with chaotic
  noise.
\newblock {\em Journal of Functional Analysis}, 272(10):4063 -- 4102, 2017.

\bibitem{KipnisVaradhan86}
C.~Kipnis and S.~R.~S. Varadhan.
\newblock Central limit theorem for additive functionals of reversible {M}arkov
  processes and applications to simple exclusions.
\newblock {\em Comm. Math. Phys.}, 104(1):1--19, 1986.

\bibitem{KKM16}
A.~{Korepanov}, Z.~{Kosloff}, and I.~{Melbourne}.
\newblock {Martingale-coboundary decomposition for families of dynamical
  systems},
\newblock {\em Annales l'Institut H. Poincare. Anal. Non Lineaire}, 35(1):859--885, 2018.

\bibitem{KKMprep}
A.~Korepanov, Z.~Kosloff, and I.~Melbourne.
\newblock Deterministic homogenization for families of fast-slow systems. \newblock In preparation.

\bibitem{KrzSzlenk69}
K.~Krzy\.zewski and W.~Szlenk.
\newblock On invariant measures for expanding differentiable mappings.
\newblock {\em Studia Math.}, 33:83--92, 1969.

\bibitem{KP91}
Thomas~G. Kurtz and Philip Protter.
\newblock Weak limit theorems for stochastic integrals and stochastic
  differential equations.
\newblock {\em Ann. Probab.}, 19(3):1035--1070, 1991.

\bibitem{Liverani96}
Carlangelo Liverani.
\newblock Central limit theorem for deterministic systems.
\newblock In {\em International {C}onference on {D}ynamical {S}ystems
  ({M}ontevideo, 1995)}, volume 362 of {\em Pitman Res. Notes Math. Ser.},
  pages 56--75. Longman, Harlow, 1996.

\bibitem{LSV99}
Carlangelo Liverani, Beno\^\i t Saussol, and Sandro Vaienti.
\newblock A probabilistic approach to intermittency.
\newblock {\em Ergodic Theory Dynam. Systems}, 19(3):671--685, 1999.

\bibitem{Lyo98}
Terry~J. Lyons.
\newblock Differential equations driven by rough signals.
\newblock {\em Rev. Mat. Iberoamericana}, 14(2):215--310, 1998.

\bibitem{MaxwellWoodroofe00}
Michael Maxwell and Michael Woodroofe.
\newblock Central limit theorems for additive functionals of {M}arkov chains.
\newblock {\em Ann. Probab.}, 28(2):713--724, 2000.

\bibitem{Mcleish74}
D.~L. McLeish.
\newblock Dependent central limit theorems and invariance principles.
\newblock {\em Ann. Probability}, 2:620--628, 1974.

\bibitem{M09b}
Ian Melbourne.
\newblock Large and moderate deviations for slowly mixing dynamical systems.
\newblock {\em Proc. Amer. Math. Soc.}, 137(5):1735--1741, 2009.

\bibitem{rapid}
I.~{Melbourne}.
\newblock {Superpolynomial and polynomial mixing for semiflows and flows}.
\newblock {\em Nonlinearity} 31(10):R268--R316, 2018.

\bibitem{MN05}
Ian Melbourne and Matthew Nicol.
\newblock Almost sure invariance principle for nonuniformly hyperbolic systems.
\newblock {\em Comm. Math. Phys.}, 260(1):131--146, 2005.

\bibitem{MN08}
Ian Melbourne and Matthew Nicol.
\newblock Large deviations for nonuniformly hyperbolic systems.
\newblock {\em Trans. Amer. Math. Soc.}, 360(12):6661--6676, 2008.

\bibitem{MS11}
I.~Melbourne and A.~M. Stuart.
\newblock A note on diffusion limits of chaotic skew-product flows.
\newblock {\em Nonlinearity}, 24(4):1361--1367, 2011.

\bibitem{MT04}
Ian Melbourne and Andrei T\"or\"ok.
\newblock Statistical limit theorems for suspension flows.
\newblock {\em Israel J. Math.}, 144:191--209, 2004.

\bibitem{MTorok12}
Ian Melbourne and Andrei T\"or\"ok.
\newblock Convergence of moments for Axiom {A} and non-uniformly hyperbolic
  flows.
\newblock {\em Ergodic Theory Dynam. Systems}, 32(3):1091--1100, 2012.

\bibitem{MV16}
Ian Melbourne and Paulo Varandas.
\newblock A note on statistical properties for nonuniformly hyperbolic systems
  with slow contraction and expansion.
\newblock {\em Stoch. Dyn.}, 16(3):1660012, 13, 2016.

\bibitem{MZ15}
Ian Melbourne and Roland Zweim\"uller.
\newblock Weak convergence to stable {L}\'evy processes for nonuniformly
  hyperbolic dynamical systems.
\newblock {\em Ann. Inst. Henri Poincar\'e Probab. Stat.}, 51(2):545--556,
  2015.

\bibitem{ParryPollicott90}
William Parry and Mark Pollicott.
\newblock Zeta functions and the periodic orbit structure of hyperbolic
  dynamics.
\newblock {\em Ast\'erisque}, (187-188):268, 1990.

\bibitem{PavliotisStuart08}
Grigorios~A. Pavliotis and Andrew~M. Stuart.
\newblock {\em Multiscale methods}, volume~53 of {\em Texts in Applied
  Mathematics}.
\newblock Springer, New York, 2008.
\newblock Averaging and homogenization.

\bibitem{PomeauManneville80}
Yves Pomeau and Paul Manneville.
\newblock Intermittent transition to turbulence in dissipative dynamical
  systems.
\newblock {\em Comm. Math. Phys.}, 74(2):189--197, 1980.

\bibitem{Ratner73}
M.~Ratner.
\newblock The central limit theorem for geodesic flows on {$n$}-dimensional
  manifolds of negative curvature.
\newblock {\em Israel J. Math.}, 16:181--197, 1973.

\bibitem{Rio00}
Emmanuel Rio.
\newblock {\em Th\'eorie asymptotique des processus al\'eatoires faiblement
  d\'ependants}, volume~31 of {\em Math\'ematiques \& Applications (Berlin)
  [Mathematics \& Applications]}.
\newblock Springer-Verlag, Berlin, 2000.

\bibitem{Ruelle78}
David Ruelle.
\newblock {\em Thermodynamic formalism}, volume~5 of {\em Encyclopedia of
  Mathematics and its Applications}.
\newblock Addison-Wesley Publishing Co., Reading, Mass., 1978.
\newblock The mathematical structures of classical equilibrium statistical
  mechanics, With a foreword by Giovanni Gallavotti and Gian-Carlo Rota.

\bibitem{Ruelle89}
David Ruelle.
\newblock The thermodynamic formalism for expanding maps.
\newblock {\em Comm. Math. Phys.}, 125(2):239--262, 1989.

\bibitem{Rychlik83}
Marek Rychlik.
\newblock Bounded variation and invariant measures.
\newblock {\em Studia Math.}, 76(1):69--80, 1983.

\bibitem{Saussol00}
Beno\^\i t Saussol.
\newblock Absolutely continuous invariant measures for multidimensional
  expanding maps.
\newblock {\em Israel J. Math.}, 116:223--248, 2000.

\bibitem{Sinai72}
Ja.~G. Sina\u\i.
\newblock Gibbs measures in ergodic theory.
\newblock {\em Uspehi Mat. Nauk}, 27(4(166)):21--64, 1972.

\bibitem{Smale67}
S.~Smale.
\newblock Differentiable dynamical systems.
\newblock {\em Bull. Amer. Math. Soc.}, 73:747--817, 1967.

\bibitem{TyranKaminska05}
Marta Tyran-Kami\'nska.
\newblock An invariance principle for maps with polynomial decay of
  correlations.
\newblock {\em Comm. Math. Phys.}, 260(1):1--15, 2005.

\bibitem{Williams01}
David R. E. Williams.
\newblock Path-wise solutions of stochastic differential equations driven by {L}\'evy processes.
\newblock {\em Rev. Mat. Iberoamericana}, 17(2):295--329, 2001.

\bibitem{Young98}
Lai-Sang Young.
\newblock Statistical properties of dynamical systems with some hyperbolicity.
\newblock {\em Ann. of Math. (2)}, 147(3):585--650, 1998.

\bibitem{Young99}
Lai-Sang Young.
\newblock Recurrence times and rates of mixing.
\newblock {\em Israel J. Math.}, 110:153--188, 1999.

\bibitem{Young02}
Lai-Sang Young.
\newblock What are {SRB} measures, and which dynamical systems have them?
\newblock {\em J. Statist. Phys.}, 108(5-6):733--754, 2002.
\newblock Dedicated to David Ruelle and Yasha Sinai on the occasion of their
  65th birthdays.

\bibitem{Young17}
Lai-Sang Young.
\newblock Generalizations of {SRB} measures to nonautonomous, random, and
  infinite dimensional systems.
\newblock {\em J. Stat. Phys.}, 166(3-4):494--515, 2017.

\bibitem{Zweimuller07}
Roland Zweim\"uller.
\newblock Mixing limit theorems for ergodic transformations.
\newblock {\em J. Theoret. Probab.}, 20(4):1059--1071, 2007.

\end{thebibliography}
\end{document}